\documentclass[12pt,letterpaper]{amsart} 



\usepackage{amsmath, amsfonts, amsthm, amssymb}
\usepackage{exscale}
\usepackage{cite}
\usepackage{epsfig}
\usepackage{amscd}
\usepackage{graphics}
\usepackage{color}
\usepackage{dsfont}

\topmargin .35in
\oddsidemargin 0.6in
\evensidemargin 0.6in
\textheight 7.8 in
\textwidth 5.5 in
\addtolength{\baselineskip}{4pt}

\renewcommand{\geq}{\geqslant}
\renewcommand{\leq}{\leqslant}

\newtheorem{theorem}{Theorem}
\newtheorem{conjecture}[theorem]{Conjecture}

\newtheorem{proposition}{Proposition}[section]

\newtheorem{corollary}[proposition]{Corollary}
\newtheorem{lemma}[proposition]{Lemma}

\newtheorem*{main-theorem}{Main Theorem}
\newtheorem*{theorem*}{Theorem}

\theoremstyle{definition}

\newtheorem{remark}[proposition]{Remark}

\newtheorem*{remark*}{Remark}

\numberwithin{equation}{section}

\def\phi{\varphi}

\newcommand{\HS}{{\mathbb H}}
\newcommand{\bbH}{{\mathbb H}}
\newcommand{\bbR}{{\mathbb R}}
\newcommand{\bbC}{{\mathbb C}}

\newcommand{\RR}{\mathbb{R}}

\newcommand{\abs}[1]{\left\lvert#1\right\rvert}
\newcommand{\norm}[1]{\left\Vert#1\right\Vert}
\newcommand{\brak}[1]{\left\langle#1\right\rangle}
\newcommand{\re}{\operatorname{Re}}
\newcommand{\im}{\operatorname{Im}}

\def\calH{\mathcal H}

\def\Im{\,\mathrm{Im}\,}

\def\supp{\mathrm{supp}\,}

\def\phi{\varphi}

\def\be{\begin{eqnarray*}}
\def\ee{\end{eqnarray*}}
\def\ben{\begin{eqnarray}}
\def\een{\end{eqnarray}}

\def\L2R{L_{\text{Rest}}^2}

\def\11{\mathds{1}}

\def\L2c{L^2_{\text{comp}}}

\def\11{\mathbb{1}}
\def\Vol{\text{Vol}}

\newcommand{\del}{\partial}
\newcommand{\cinf}{C^\infty}

\begin{document}

\title[Dispersive Estimates]{Dispersive Estimates for Scalar and Matrix Schr\"odinger operators on $\HS^{n+1}$}

\author[D. Borthwick]{David Borthwick}
\email{borthwick@math.emory.edu}
\address{Department of Mathematics, Emory University}

\author[J.L. Marzuola]{Jeremy L. Marzuola}
\email{marzuola@math.unc.edu}
\address{Department of Mathematics, UNC-Chapel Hill \\ CB\#3250
  Phillips Hall \\ Chapel Hill, NC 27599}

\subjclass[2000]{}
\keywords{}

\begin{abstract}

We study resolvent estimates, spectral theory and large time dispersive properties of scalar and matrix Schr\"odinger-type operators on $\HS^{n+1}$ for $n \geq 1$.

\end{abstract}

\maketitle

\section{Introduction}
\label{sec:intro}

In this note, we explore the dispersive behavior of solutions to the perturbed Schr\"odinger equation on hyperbolic space,
\begin{equation}
i\partial_t u - \Delta u  + V u = 0,  
\label{eqn:hs}
\end{equation}
where $\Delta$ is the (non-positive definite) Laplacian on $\bbH^{n+1}$, $n\ge 1$, and $V$ is a real potential.  
We will also consider certain matrix versions of this equation, motivated by stability questions for the non-linear Schr\"odinger equation in $\bbH^{n+1}$.

Embedded eigenvalues and/or resonances would present obstructions to dispersive estimates, 
but in the scalar case we can rule these out under mild decay assumptions on the potential, except at the bottom of the continuous spectrum.  The decay condition is express in terms of the function $\rho := e^{-r}$, where $r$ denotes the radial geodesic coordinate on $\bbH^{n+1}$.
(For the conformal  compactification of $\bbH^{n+1}$, $\rho$ serves as a boundary-defining coordinate.)

The free resolvent on $\bbH^{n+1}$ is usually written in the form 
\[
R_0(s) := (-\Delta - s(n-s))^{-1},
\]
with the half-plan $\{\re s > \tfrac{n}2\}$ corresponding to the resolvent set of $-\Delta$.   
The critical line $\{\re s = \tfrac{n}2\}$ is a double cover of the continuous spectrum, $\sigma(-\Delta) = [\tfrac{n^2}4, \infty)$.  
With this convention, $R_0(s)$ admits a meromorphic continuation to $s \in \bbC$, as a bounded operator $\rho^{N} L^2(\bbH^{n+1}) \to \rho^{-N} L^2(\bbH^{n+1})$ for $\re s > \tfrac{n}2 - N$.

\begin{theorem}\label{absence.thm}
For $V \in \rho^\alpha L^{\infty}(\bbH^{n+1}, \bbR)$ with $\alpha>0$, the operator $-\Delta + V$ has continuous spectrum $[\tfrac{n^2}4, \infty)$, with no embedded eigenvalues in the range $(\tfrac{n^2}4, \infty)$.  Moreover, the resolvent,
\[
R_V(s) := (-\Delta + V - s(n-s))^{-1},
\] 
admits a meromorphic continuation to $\re s \ge \tfrac{n}2 - \delta$ as an operator $\rho^\delta L^2 \to \rho^{-\delta}L^2$, for 
$\delta < \alpha/2$.  The continued resolvent $R_V(s)$ has no poles 
on the critical line $\re s = \tfrac{n}2$ except possibly at $s = \tfrac{n}2$.
\end{theorem}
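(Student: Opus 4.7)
The plan is to establish the three assertions in sequence: continuous spectrum, meromorphic continuation of $R_V(s)$, and absence of poles on the critical line.

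For the first two I would use fairly standard arguments. Since $V \in \rho^\alpha L^\infty$ with $\alpha>0$, multiplication by $V$ is relatively compact with respect to $-\Delta$: the decay factor $\rho^\alpha$ gives compactness at infinity, while $(-\Delta+1)^{-1}$ gains two derivatives. Weyl's theorem then preserves the essential spectrum, giving $\sigma_{\text{ess}}(-\Delta + V) = [\tfrac{n^2}{4}, \infty)$. For the meromorphic continuation, I would factor $V = V_1 V_2$ with $V_1 = \sgn(V) |V|^{1/2}$ and $V_2 = |V|^{1/2}$, both lying in $\rho^{\alpha/2} L^\infty$. The symmetrized operator
\[
K(s) := V_1\, R_0(s)\, V_2
\]
is then a holomorphic family of compact operators on $L^2(\bbH^{n+1})$ throughout $\re s > \tfrac{n}{2} - \tfrac{\alpha}{2}$, since $V_1, V_2$ provide enough decay to exploit $R_0(s) : \rho^{\alpha/2} L^2 \to \rho^{-\alpha/2} L^2$ together with elliptic regularity (Rellich). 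The operator $I + K(s)$ is invertible by Neumann series for $\re s$ sufficiently large, so the analytic Fredholm theorem gives meromorphic continuation of $(I + K(s))^{-1}$, and the identity
\[
R_V(s) = R_0(s) - R_0(s)\, V_2\, (I + K(s))^{-1}\, V_1\, R_0(s)
\]
transfers this to meromorphic continuation of $R_V(s) : \rho^\delta L^2 \to \rho^{-\delta} L^2$ for any $\delta < \alpha/2$.

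The delicate part is ruling out poles on $\{\re s = \tfrac{n}{2}\} \setminus \{\tfrac{n}{2}\}$. A pole at $s_0 = \tfrac{n}{2} + i\tau$ with $\tau \neq 0$ would produce a nontrivial $u \in \rho^{-\delta} L^2$ with $(-\Delta + V - s_0(n-s_0)) u = 0$. My plan is to upgrade $u$ to a genuine $L^2$ eigenfunction and then rule such an eigenfunction out. For the upgrade, the key observation is that $Vu$ has strictly more decay than $u$ (namely $Vu \in \rho^{\alpha - \delta} L^2$ with $\alpha - \delta > \delta$), so iterating the identity $u = -R_0(s_0) Vu$ and combining with a boundary pairing (Rellich--Agmon) identity on $\bbH^{n+1}$ --- which relates $\Im\langle Vu, u\rangle$ to the radiation pattern of $u$ at infinity --- forces the leading $\rho^{s_0}$ and $\rho^{n-s_0}$ coefficients at infinity to vanish. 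Successive iteration then places $u \in L^2$.

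Having reduced to an embedded $L^2$ eigenfunction at the real eigenvalue $\tfrac{n^2}{4} + \tau^2 > \tfrac{n^2}{4}$, the absence of embedded eigenvalues itself calls for a positive commutator (Mourre-type) estimate adapted to hyperbolic geometry. With a conjugate operator $A$ derived from the hyperbolic radial vector field --- a symmetric variant of $\sinh r \, \partial_r$, or a Froese--Hislop-type construction --- I would establish $[-\Delta + V, iA] \geq c_0 > 0$ on compact spectral subintervals of $(\tfrac{n^2}{4}, \infty)$, with the commutator of $V$ against $A$ controlled by the decay $\rho^\alpha$. The virial identity $\langle u, [-\Delta + V, iA] u\rangle = 0$ then forces $u = 0$. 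I expect this Mourre step to be the main technical obstacle: the meromorphic continuation and the bootstrap from resonance to eigenfunction are relatively standard machinery, whereas the positive commutator requires delicate geometric computations on $\bbH^{n+1}$ together with careful regularity hypotheses on $V$ with respect to the conjugate operator.
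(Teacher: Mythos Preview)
Your overall architecture matches the paper's: meromorphic continuation via analytic Fredholm, then a two-step argument on the critical line (upgrade a pole to an $L^2$ eigenfunction, then rule out embedded eigenvalues). The details differ in two places.

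For the meromorphic continuation the paper works with $VR_0(s)$ directly as a compact operator on $\rho^\delta L^2$ (citing Mazzeo for compactness of $\rho^\alpha R_0(s)$), rather than your symmetrized Birman--Schwinger factorization $V_1 R_0(s) V_2$ on unweighted $L^2$; both are standard and interchangeable. For the resonance-to-eigenfunction step the paper is close to what you sketch: it invokes an asymptotic expansion $u = \rho^{n/2+i\lambda} a + \rho^{\alpha/2} v$ (from Mazzeo's theory, or equivalently from $u = -R_0(\tfrac{n}{2}+i\lambda)Vu$) and then kills the boundary coefficient $a$ via a Melrose-style boundary pairing, computing $\langle[-\Delta,\psi_\delta]u,u\rangle = 0$ with a radial cutoff $\psi_\delta$ and taking $\delta\to 0$. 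This is exactly a Rellich-type identity of the sort you propose.

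The genuine divergence is in the exclusion of embedded eigenvalues. The paper does \emph{not} use Mourre theory; it adapts Donnelly's Carleman-style integral-inequality argument. One writes $w = (\sinh r)^{n/2} u$, defines an energy functional $G(r)$ along spheres, and shows $(rG(r))' \ge 0$ for large $r$ using only $V = o(r^{-1})$; integrability then forces $G \le 0$, and a second functional $L_m(r)$ with polynomial weights $r^m$ yields a contradiction. The advantage over your Mourre proposal is precisely the point you flagged: Donnelly's argument needs no regularity of $V$ with respect to any conjugate operator, only $V \in L^\infty$ with $o(r^{-1})$ decay. Your positive-commutator route would in principle work, but making the virial identity rigorous for a merely $L^\infty$ potential (with no control on $[V,iA]$) is a real obstacle, and the paper's approach sidesteps it entirely.
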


For smooth potentials, an eigenvalue at $n^2/4$ (the bottom of the continuous spectrum), is ruled out by Bouclet \cite{bouclet2013absence} under a weaker decay assumption.   However, we are not aware of any condition on $V$ that would rule 
out a resonance at $s = \tfrac{n}2$, so we will take the regularity of $R_V(s)$ at this point as an assumption.

\begin{theorem}
\label{thm1}
Suppose $V\in \rho^\alpha L^{\infty}(\bbH^{n+1}, \bbR)$ with $\alpha>0$ and
\begin{equation}\label{alpha.cond}
\frac{\alpha}n > 1 - \left\lfloor \frac{n+5}4 \right\rfloor^{-1}.
\end{equation}
Assuming that $R_V(s)$ does not have a pole at $s = \tfrac{n}2$,  for $t \geq 1$ we have the dispersive bound
\[
\norm{e^{i t (-\Delta + V)} P_c }_{L^1 \to L^\infty} \le C_n  t^{-\frac32},
\]
where $P_c$ denotes the projection on the continuous spectrum of the operator $-\Delta + V$.  
\end{theorem}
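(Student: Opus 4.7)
My plan is to combine Stone's formula with a Born-series expansion of the perturbed resolvent, ultimately reducing the dispersive bound to the free estimate $\|e^{-it\Delta}\|_{L^1\to L^\infty}\le C t^{-3/2}$ for $t\ge 1$ on $\bbH^{n+1}$ (Banica, Anker--Pierfelice, Ionescu--Staffilani). Writing via Stone's formula and the substitution $\lambda=n^2/4+\xi^2$,
\[
e^{it(-\Delta+V)}P_c = \frac{1}{\pi i}\int_0^\infty e^{it(n^2/4+\xi^2)}\bigl[R_V(\tfrac n2+i\xi)-R_V(\tfrac n2-i\xi)\bigr]\xi\,d\xi,
\]
and cutting off with a smooth $\chi(\xi)$ equal to $1$ near $0$, splits the problem into low- and high-energy pieces.

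For the low-energy part, Theorem~\ref{absence.thm} together with the no-resonance assumption at $s=n/2$ ensures $\xi\mapsto R_V(n/2+i\xi)$ is holomorphic in a neighborhood of $\xi=0$ as a map $\rho^\delta L^2\to\rho^{-\delta}L^2$. Sandwiching with $V\in\rho^\alpha L^\infty$ factors via the resolvent identity then improves the weights enough to obtain an $L^1\to L^\infty$ bound that depends smoothly on $\xi$; after changing variables $u=\xi^2$, the extra factor of $\xi$ in the measure produces the usual H\"older-$\tfrac12$ behavior of the integrand at $u=0$, and standard stationary-phase/Van der Corput estimates yield the $t^{-3/2}$ decay.

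For the high-energy part, I would iterate the second resolvent identity $R_V=R_0-R_0 VR_V$ to obtain
\[
R_V = \sum_{k=0}^{2N-1}(-1)^k R_0(VR_0)^k + (R_0V)^N R_V(VR_0)^N,
\]
with $N=\lfloor (n+5)/4\rfloor$. Each finite-order term, inserted into the Stone formula, unfolds by Fubini into a time-simplex integral of compositions $e^{-it_0\Delta}V\cdots Ve^{-it_k\Delta}$, which the free dispersive bound controls in $L^1\to L^\infty$ (a Christ--Kiselev-type step handling the simplex measure). The remainder I would treat by combining a high-energy limiting-absorption estimate on $R_V(n/2+i\xi)$ in weighted $L^2$ with repeated integration by parts in $\xi$; the $\rho^{2N\alpha}$ weight contributed by the $V$ factors must absorb both the polynomial growth of the resolvent and the derivative losses coming from $\partial_\xi^k R_0$, and condition (\ref{alpha.cond}) is exactly the accounting that closes this loop.

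I expect the main obstacle to be this high-energy remainder: establishing sharp weighted-$L^2$ bounds on $\partial_\xi^k R_0(n/2\pm i\xi)$ and on the perturbed resolvent at large $\xi$ that make the decay of $V$ do enough work to reach $t^{-3/2}$. Matching the number of iterations $N=\lfloor(n+5)/4\rfloor$ to the growth of the spectral measure and the number of integrations by parts required is where the bulk of the technical bookkeeping lies.
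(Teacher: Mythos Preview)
Your skeleton---Stone's formula, low/high split, Born expansion with $m=\lfloor(n+5)/4\rfloor$---matches the paper, but the execution diverges in an essential way. The paper does \emph{not} unfold the finite Born terms into Duhamel time-simplex integrals of free propagators; instead it proves a single pointwise kernel bound $\sup_{z,w}|\partial_\lambda^q \Im R_V(\tfrac n2+i\lambda;z,w)|\le C_q\langle\lambda\rangle^M$ (Proposition~\ref{ImRV.prop}), obtained by writing each term $R_0(VR_0)^\ell$ and the remainder $(R_0V)^m R_V(VR_0)^m$ as iterated kernel integrals and estimating them via $L^q$ bounds on the free resolvent kernel (Lemma~\ref{R0.Lp.lemma}) together with a Young-type inequality (Lemma~\ref{young.lemma}). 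Once that pointwise bound is in hand, both frequency regimes are short: high frequency by repeated integration by parts in $\lambda$, low frequency by one integration by parts followed by the one-dimensional free dispersive estimate applied to a compactly supported function of $\lambda$. Your Duhamel route faces a real obstacle you have not addressed: the free $\bbH^{n+1}$ bound is $\|e^{-it\Delta}\|_{L^1\to L^\infty}\lesssim t^{-(n+1)/2}$ for \emph{small} $t$, and composing such factors over a simplex $t_0+\cdots+t_k=t$ produces endpoint singularities that a Christ--Kiselev step does not obviously absorb.

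The role of \eqref{alpha.cond} is also different from what you describe. It is not about derivative losses---the paper shows $\lambda$-derivatives of $R_0$ do not change the order of growth. Rather, the diagonal singularity $r^{1-n}$ forces $R_0(\tfrac n2+i\lambda;z,\cdot)\rho^{\alpha/2}\in L^q$ only for $q<\frac{n+1}{n-1}$; iterating $m$ times via Young to reach an $L^2$ pairing with the weighted full resolvent $\rho^{\alpha/2}R_V\rho^{\alpha/2}$ requires $q=\frac{2m}{2m-1}$, hence $m>\frac{n+1}{4}$, and the weight condition $\alpha/2>n(1/q-1/2)$ becomes $\alpha>n(1-1/m)$ with $m=\lfloor(n+5)/4\rfloor$. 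Your low-energy argument has the same gap as the high-energy one: holomorphy of $R_V$ near $\xi=0$ as a map between weighted $L^2$ spaces does not by itself yield an $L^1\to L^\infty$ bound on the propagator, and ``sandwiching with $V$'' does not help since the outermost resolvents in the spectral formula carry no $V$ on the outside. This is precisely where the paper's pointwise kernel estimate does the work.
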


The proof involves estimation of the kernel of $R_V(s)$ by applying a version of Young's inequality to terms in the Birman-Schwinger resolvent expansion.  The restriction on $\alpha$ results from the $L^p$ estimates of the free resolvent kernel used in this technique and may not be sharp.  In this result we focus on the long-time behavior, which corresponds to the low-frequency asymptotic behavior of the perturbed spectral measure.  The short-time estimate would be an inherently high-frequency result and would require a very careful construction of a semiclassical parametrix for the perturbed operator.  For rough and/or long range potentials the standard semiclassical methods do not apply directly.  As a result, we will restrict our analysis of perturbed Laplacian operators to large times throughout the paper.  

Dispersive estimates of this type are motivated by trying to generalize the notion of wave operators for the 
Schr\"odinger equation on $\HS^{n+1}$,
and also by the question of asymptotic stability of nonlinear bound states in $\HS^{n+1}$.  To see how linearization at a bound state gives rise to a matrix equation, consider a general NLS equation of the form,
\[
i\partial_t u + \Delta u + \beta(\abs{u}^2) u = 0.  
\]
The bound states in question are solutions of the form,
\begin{equation*}
u(t,z) = e^{i (\mu - \frac{n^2}4) t} \Psi(z) ,
\end{equation*} 
where $\mu > \tfrac{n^2}4$ and $\Psi$ is a solution for the corresponding stationary problem,
\begin{equation}
\label{eqn:stat}
-\Delta \Psi + (\mu - \tfrac{n^2}4) \Psi - \beta(\abs{\Psi}^2) \Psi  = 0.
\end{equation}
(We shift the parameter $\mu$ to account for the fact that the spectrum of $-\Delta$ starts at $\tfrac{n^2}4$.)
For the polynomial case $\beta(|u|^2) = |u|^{p}$, existence of such bound states is established for $0< p < 4/(n-1)$ in \cite{CMMT,ChMa-hyp,ManSan}.  Furthermore, these bound state solutions are shown to be radial and positive.

To linearize at the bound state, we take the ansatz
\begin{equation}\label{perturbed.soln}
u(t,x) = e^{i (\mu - \frac{n^2}4) t}(\Psi(z) + \phi(t,z)).
\end{equation}
Inserting this into the NLS equation, and using \eqref{eqn:stat} to simplify, we have 
\[
i \partial_t \varphi + \Delta \varphi - (\mu - \tfrac{n^2}4) \varphi + \beta(\abs{\Psi}^2)\varphi + 2\beta'( \abs{\Psi}^2 ) \Psi^2 \re(\varphi) 
= O(\varphi^2),
\]
The presence of the term $\re \phi$ turns this into a system of the form,
\begin{equation}\label{eqn:mhs}
(i\partial_t + \calH) \begin{pmatrix} \varphi \\ \overline{\varphi} \end{pmatrix} = 0,
\end{equation}
with matrix Schr\"odinger operator,
\begin{equation}\label{calH.def}
\calH := \begin{pmatrix} -\Delta + (\mu - \tfrac{n^2}4) & 0 \\
0 & \Delta - (\mu - \tfrac{n^2}4)  \end{pmatrix}
+ \begin{pmatrix} -V_1  & -V_2 \\
V_2 & V_1  \end{pmatrix}.
\end{equation}

Since the $V_j$'s are combinations of $\beta(\abs{\Psi}^2)$ and $\beta'(\abs{\Psi}^2) \Psi^2$, they inherit decay and regularity properties from the bound state solutions $\Psi$.  
Following the ideas, for instance, of \cite{Kwong} and \cite{PelSer}, one can show for $\beta(\abs{u}^2) = \abs{u}^p$ 
that the radial solutions $\Psi$ satisfy $\Psi \in \rho^{n/2 + \sqrt{\mu} - \epsilon}L^\infty$ for $\epsilon>0$.  It follows in this case that the potentials satisfy $V_1, V_2 \ \in \rho^\alpha L^{\infty}(\bbH^{n+1})$ for $\alpha < p(\tfrac{n}2 + \sqrt{\mu})$.  For further regularity and decay properties of bound states, see \cite{CMMT}.  

Note that by an application of Weyl's Theorem \cite[Theorem XIII.14]{RSv4}, under the assumption that $V_1, V_2 \ \in \rho^\alpha L^{\infty}(\bbH^{n+1})$, the continuous spectrum of the operator \eqref{eqn:mhs} is $(-\infty, - \mu] \cup [ \mu, \infty)$.  
Because $R_0(s)V_j$ is a compact operator on $L^2(\bbH^{n+1})$ for $\re s \ge \tfrac{n}2$, 
the argument follows verbatim from the Euclidean space argument in \cite[Lemma 3]{ES1}.
In fact, via the symmetry properties of $\calH$, one can observe that the spectrum must be contained in the union of the real and imaginary axes.  We will not we this fact here, because we are only concerned with the projection onto the continuous spectrum.

We will prove the following theorem for solutions to \eqref{eqn:mhs}.
\begin{theorem}
\label{thm2}
Let $n \geq 1$ and $V_1, V_2 \ \in \rho^\alpha L^{\infty}(\bbH^{n+1}, \bbR)$ with $\alpha>0$ satisfying \eqref{alpha.cond}.  Assume that 
$\mathcal{H}$ has no embedded or endpoint eigenvalues or resonances.  Then, for $t \geq 1$
\[
\norm{e^{-it \calH } P_c }_{L^1 \to L^\infty} \le C_d  t^{-\frac32},
\]
where $P_c$ denotes the projection on the continuous spectrum of $\calH$.   
\end{theorem}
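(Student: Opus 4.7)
The plan is to parallel the proof of Theorem \ref{thm1}, upgrading its Birman--Schwinger resolvent analysis and oscillatory-integral machinery to the matrix setting. First I would decompose $\calH = \calH_0 + \mathcal{V}$ with
\[
\calH_0 = \begin{pmatrix} L & 0 \\ 0 & -L \end{pmatrix}, \qquad L := -\Delta + (\mu - \tfrac{n^2}{4}),
\]
and $\mathcal{V}$ the potential matrix built from $V_1, V_2$. The free matrix resolvent $\calR_0(z) := (\calH_0 - z)^{-1}$ is diagonal, with entries expressible through the scalar free resolvent: $(L - z)^{-1} = R_0(\tfrac{n}{2} + \sqrt{\mu - z})$ and $-(L + z)^{-1} = -R_0(\tfrac{n}{2} + \sqrt{\mu + z})$. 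Via these identifications, the meromorphic continuation and the weighted $L^p$ kernel bounds for $R_0$ used in the scalar theorem transfer directly to $\calR_0$, with the threshold points $z = \pm\mu$ playing the role of $s = n/2$ in the scalar picture.

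Next I would expand via Birman--Schwinger,
\[
\calR_\calH(z) = (I + \calR_0(z)\mathcal{V})^{-1} \calR_0(z),
\]
and apply analytic Fredholm theory on weighted $L^2$ spaces along the lines of Theorem \ref{absence.thm}. Since $\calR_0(z)\mathcal{V}$ is compact on such spaces for $z$ off the continuous spectrum, the assumption that $\calH$ has no embedded or endpoint eigenvalues or resonances ensures that $I + \calR_0(z)\mathcal{V}$ is boundedly invertible on a neighborhood of the closed continuous spectrum $(-\infty,-\mu]\cup[\mu,\infty)$, including the two thresholds $\pm\mu$. This yields a meromorphic continuation of $\calR_\calH$ across each branch of spectrum and, by iteration of the resolvent identity, a finite Birman--Schwinger expansion whose terms can be bounded pointwise using the scalar kernel estimates established in the proof of Theorem \ref{thm1}.

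Finally I would invoke a Stone-type spectral formula for the pseudo-Hermitian operator $\calH$,
\[
e^{-it\calH} P_c = \frac{1}{2\pi i}\int_{\sigma_c(\calH)} e^{-it\lambda}\bigl[\calR_\calH(\lambda+i0) - \calR_\calH(\lambda-i0)\bigr]\,d\lambda,
\]
and split the integral into the branches $\lambda \in (-\infty,-\mu]$ and $\lambda \in [\mu,\infty)$. On each branch, after the change of variables inherited from the identification above (so that the spectral parameter becomes $s = \tfrac{n}{2} + i\xi$), the analysis reduces to that of the scalar case: low-frequency contributions near the thresholds are controlled by $L^1 \to L^\infty$ bounds on the Birman--Schwinger expansion, while high-frequency contributions produce the $t^{-3/2}$ decay through stationary-phase estimates in $\xi$. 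Summing the contributions from the two branches then yields the stated bound.

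The main obstacle is the matrix Birman--Schwinger step. The off-diagonal entries $\pm V_2$ of $\mathcal{V}$ couple the two free-resolvent sheets, so a typical term of the Neumann series is a product of scalar resolvents evaluated alternately at $\tfrac{n}{2} + \sqrt{\mu - z}$ and $\tfrac{n}{2} + \sqrt{\mu + z}$. Verifying that the Young-type kernel estimates from Theorem \ref{thm1} survive these mixed products, uniformly as $z$ approaches the real axis and uniformly near both thresholds, is the delicate point. The decay hypothesis \eqref{alpha.cond} was, however, calibrated precisely so that the scalar kernel estimates close, and once the no-resonance assumption secures uniform invertibility of $I + \calR_0(z)\mathcal{V}$ at $\pm\mu$, those estimates should transfer to the matrix setting without essential modification.
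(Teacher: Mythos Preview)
Your proposal is correct and follows essentially the same route as the paper's proof in Section~\ref{sec:matrix}: diagonal free matrix resolvent in terms of scalar $R_0$, Fredholm/limiting-absorption to define $(\calH-(m\pm i0))^{-1}$, the Stone-type representation \eqref{matrix.eith}, and then the Birman--Schwinger plus oscillatory-integral argument from Sections~\ref{sec:specres}--\ref{sec:scalarinhom}. The one point you flag as the ``main obstacle''---mixed products of $R_0(\tfrac{n}{2}+\sqrt{\mu-z})$ and $R_0(\tfrac{n}{2}+\sqrt{\mu+z})$---is in fact the easy part: on each branch of $\sigma_c(\calH)$ one of these two scalar resolvents has spectral parameter with real part strictly greater than $\tfrac{n}{2}$ (e.g.\ $R_0(\tfrac{n}{2}+\sqrt{2\mu+\tau^2})$ when $m=\mu+\tau^2$), so its kernel enjoys better decay than the on-critical-line factor and the Young-type estimates of Lemma~\ref{R0.Lp.lemma} carry over without additional work.
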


\begin{remark}
Following the standard method, the above dispersive estimates will give an extended range of Strichartz estimates the perturbed hyperbolic Scrh\"odinger problem, as in for instance \cite{Ban1,BCS,anker2009nonlinear,ionescu2012global}.
\end{remark}

For the scalar case we were able to establish absence of embedded eigenvalues and resonances using properties of the potential.  
Since $\calH$ is not self-adjoint, it is more difficult to rule these out in the matrix case.  
For methods of verification of these spectral conditions for the matrix operators in $\RR^d$, see \cite{M-Simpson}.  
Further analysis of the spectrum of $\calH$ will be a topic of future work towards the asymptotic stability question. 

In  \cite{S1}, Schlag studies the behavior of solutions near a nonlinear bound state for the cubic Schr\"odinger equation on $\RR^3$ and introduces 
a strong notion of stability.   The dispersive estimates of Theorem~\ref{thm2} constitute a crucial component for asymptotic stability analysis of a similar form in $\HS^{n+1}$.  

It is proved in \cite{S1} that there is is a codimension-one stable manifold of perturbations to the ground state for the cubic Euclidean NLS equation in $\RR^3$.  In $\RR^2$, the cubic NLS is $L^2$-critical and hence all possible bound states have the same $L^2$ mass, from a scaling argument, and display self-similar blow-up; see for instance \cite{merle2005blow} and many others referenced within.  The soltions for $\HS^2$ and $\HS^3$ can be seen to be orbitally unstable as in the recent work \cite{banica2014global}.
We note that blow-up is known to occur for mass above that of the Euclidean ground state by an argument in \cite{Ban1} using arguments of Glassey in \cite{Glassey}.  Since proving this requires a much more detailed analysis for the components of $\calH$ in the $\HS^2$ setting, we state it here as a conjecture.  See also \cite{raphael2011existence} for a related problem regarding an inhomogeneous cubic NLS equation in $\RR^2$.  In $\RR^3$, the cubic NLS is supercritical.  However, the spectral properties of the matrix operator are simpler due to the lack of scaling invariance in $\HS^{n+1}$.  

\begin{conjecture}
In $\HS^2$ and $\HS^3$, there is a codimension-one manifold of stable perturbations of a soliton for the cubic NLS equation.  The instability will related to an unstable direction in the spectrum of the operator linearized about the soliton.  In general, orbitally stable bound states associated with $C^2$ nonlinearities are actually asymptotically stable.
\end{conjecture}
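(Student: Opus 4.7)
The plan is to emulate the argument of Schlag \cite{S1} for the cubic NLS on $\RR^3$, with Theorem~\ref{thm2} playing the role of the corresponding Euclidean dispersive bound and with the simplifications afforded by the absence of scaling invariance on $\bbH^{n+1}$. Given a ground state $\Psi$ solving \eqref{eqn:stat} with $\beta(|u|^2)=|u|^2$, I would fix the ansatz \eqref{perturbed.soln} so that the equation for the perturbation $\phi$ becomes a semilinear equation driven by the matrix operator $\calH$ of \eqref{calH.def}. The goal is to construct a local center-stable manifold of initial data $\phi(0)$ for which $\phi(t)$ decays in an appropriate norm, and to show that its complement consists of one unstable direction.

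The first step is a careful spectral analysis of $\calH$. By Weyl's theorem and the observation from the introduction, the continuous spectrum is $(-\infty,-\mu]\cup[\mu,\infty)$; because $\bbH^{n+1}$ admits only isometries (no dilation), the generalized kernel of $\calH$ is spanned by symmetry-generated modes $(\partial_\theta\Psi,\overline{\partial_\theta\Psi})^T$ and by $(i\Psi,-i\Psi)^T$ corresponding to phase rotation, so it is strictly smaller than in the Euclidean case. I would next establish the existence of a single real positive eigenvalue $\lambda>0$ of $\calH$ (with a conjugate $-\lambda$) coming from the Vakhitov--Kolokolov criterion for the cubic power; the associated eigenfunction provides the unstable direction in the conjecture. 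On $\bbH^2$ and $\bbH^3$ this should be verified by combining the radial symmetry of $\Psi$ with Sturmian arguments on the half-line for $-\Delta-\lambda^2$ acting on functions of $r$, using that $\Psi\in\rho^{n/2+\sqrt{\mu}-\epsilon}L^\infty$.

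Next, assuming the spectral hypotheses of Theorem~\ref{thm2} hold for $\calH$ (absence of embedded eigenvalues and threshold resonances), I would decompose $\phi(t) = a_-(t)\eta_- + a_+(t)\eta_+ + \phi_\sharp(t)$ where $\eta_\pm$ are the eigenfunctions at $\pm\lambda$ and $\phi_\sharp\in\mathrm{Range}(P_c)$ is subject to symplectic orthogonality relations that kill the modulation modes via standard modulation parameters $\theta(t)$ and any continuous symmetry parameters. The equations for $a_\pm$ are scalar ODEs perturbed by the nonlinearity, while $\phi_\sharp$ satisfies a linear Schr\"odinger equation with source, estimated via Theorem~\ref{thm2} together with the $L^1 \to L^\infty \Rightarrow$ Strichartz machinery on $\bbH^{n+1}$ from \cite{anker2009nonlinear,ionescu2012global}. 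Following Schlag, I would set up a Banach fixed-point in a mixed space enforcing $t^{-3/2}$ pointwise decay plus Strichartz norms, and use the Lyapunov--Perron method: for any small initial $\phi_\sharp(0)$ and $a_-(0)$, the requirement that $a_+(t)$ not blow up exponentially selects a unique $a_+(0)$, giving the codimension-one manifold. The $C^2$ assumption on $\beta$ in the last sentence of the conjecture enters to make the nonlinear remainders Lipschitz in the required norm.

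The main obstacle will be verifying the spectral hypotheses of Theorem~\ref{thm2} for $\calH$, i.e.\ ruling out embedded eigenvalues in $(-\infty,-\mu]\cup[\mu,\infty)$ and thresholds at $\pm\mu$. In the Euclidean setting this uses the Fermi golden rule and Agmon-type limiting absorption bounds; adapting these to $\bbH^{n+1}$ will require extending the scalar absence-of-embedded-eigenvalues result of Theorem~\ref{absence.thm} to the non-self-adjoint matrix $\calH$, likely via Carleman or commutator estimates tailored to the geometry $\rho = e^{-r}$, as discussed in \cite{bouclet2013absence} and \cite{M-Simpson}. A secondary technical difficulty is the lack of a suitable Birman--Schwinger-type expansion for $(\calH - z)^{-1}$ near the thresholds $\pm\mu$; here one may have to develop a matrix analogue of the expansion used to prove Theorem~\ref{thm1}, and the restriction \eqref{alpha.cond} may need to be checked against the concrete decay rate of the bound state $\Psi$ for the cubic nonlinearity on $\bbH^2$ and $\bbH^3$.
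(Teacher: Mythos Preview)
The statement you are attempting to prove is labeled a \emph{Conjecture} in the paper, and the paper does not prove it. Immediately after stating the conjecture, the authors write ``We plan to address these questions of long time dynamics in future work,'' and they explicitly note that ``Further analysis of the spectrum of $\calH$ will be a topic of future work towards the asymptotic stability question.'' So there is no proof in the paper to compare your proposal against.

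That said, your proposal is not a proof but a research outline, and as such it is well aligned with what the paper itself suggests: the authors point to Schlag \cite{S1} as the model, and emphasize that Theorem~\ref{thm2} provides the dispersive input needed for such an analysis. Your plan to follow the Lyapunov--Perron construction of a center-stable manifold, with the unstable direction coming from a simple real eigenvalue of $\calH$, is the expected route. You also correctly identify the genuine obstructions: verifying the spectral hypotheses of Theorem~\ref{thm2} for the non-self-adjoint matrix $\calH$ (no embedded eigenvalues, no threshold resonance at $\pm\mu$) is precisely what the paper flags as open, and the remark after Lemma~\ref{res.crit.lemma} shows that the boundary-pairing argument used in the scalar case breaks down for the matrix operator. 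Until those spectral facts are established, the conjecture remains open, and your outline should be read as a plausible program rather than a proof.
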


We plan to address these questions of long time dynamics in future work.
This investigation will depend upon spectral properties of the linearized operator about a soliton in $\HS^{n+1}$ and on subsequent stability results.  

The paper is organized as follows.  In Section \ref{sec:scalarhom}, we offer a different proof of the dispersive estimates for the free Schr\"odinger equation for $t \geq 1$,highlighting a simple and elegant treatment of the resolvent in $\HS^{n+1}$.  Such estimates have previously appeared in for instance the works \cite{Ban1,BCS,bcd2009,pierfelice2008weighted,IS,anker2009nonlinear,bouclet2011strichartz,ionescu2012global} and others, most of which also analyze interesting behaviors like scattering or blow-up for nonlinear Schr\"odinger equations on $\HS^{n+1}$.  
In Section \ref{sec:specasym} we develop the asymptotic properties of the free resolvent kernel, in various frequency and spatial limits.  Then, we will develop the necessary operator norm estimates for the full (scalar) resolvent in Section \ref{spectheory}.  
We prove Theorem~\ref{absence.thm} in Section \ref{absence}, using Carleman-style estimates.  Finally, in Sections \ref{sec:scalarinhom} and \ref{sec:matrix}, we will analyze the dispersive properties of both the inhomogeneous scalar and matrix Schr\"odinger equations and prove Theorems~\ref{thm1} and \ref{thm2}, respectively.  

The work here can be seen as an extension of the scattering theory to perturbations of the hyperbolic Laplacian, similar to the theory developed for the perturbed scalar and matrix Euclidean Schr\"odinger equations in \cite{RSv4} as well as more recently in \cite{GolSch,ES1,S1,Mar-lin} and many others.  As noted above, the matrix operator involves a non-self-adjoint perturbation of a self-adjoint matrix operator.  In that regard, it would be interesting to treat non-self-adjoint perturbations of the scalar problem as well, such as $-\Delta_{\HS^{n+1}} + W \nabla_{\HS^{n+1}} + V$, though more spectral assumptions would be required in that case and we do not treat it here.  

\section*{Acknowledgments} DB received support from NSF Grant DMS--0901937.  JLM was supported by NSF Grant DMS--1312874 and NSF CAREER Grant DMS--1352353.  The authors are grateful to the organizers of the ``Quantum chaos, resonances and semi-classical measure'' program in Roscoff, France, where this work began.  In addition, JLM is grateful to the Hausdorff Institute of Mathematics in Bonn, Germany where part of this work was completed, as well as for several useful conversations over the last several years with Michael Taylor, especially with respect to the decay of nonlinear bound states on $\HS^{n+1}$.  In addition, he thanks Hans Christianson, Jason Metcalfe, Enno Lenzmann and Thomas Boulenger for many useful discussions on geometric scattering theory.  We thank the anonymous referee for making suggestions to improve an earlier version of the draft.

\section{Dispersive estimates for the free Laplacian in $\bbH^{n+1}$}
\label{sec:scalarhom}

To motivate the treatment of the perturbed case later on, we will first present a proof 
of the large-time $L^1 \to L^\infty$ dispersive bound for the free Schr\"odinger equation in $\bbH^{n+1}$.  
Note that for this free case both large and small time bounds have been proven by somewhat different 
approaches in several other references, such as \cite{Ban1,pierfelice2008weighted,IS,ionescu2012global}.  
Our goal here is to highlight the fact that it is the degree of vanishing of the spectral resolution at the bottom of the spectrum, plus smoothness in the spectral parameter, that gives rise to the power $t^{-3/2}$ in the large-time dispersive estimate.  One can see clearly from the proof that a cutoff to high-frequencies would yield a large-time decay of order $t^{-\infty}$.  

\begin{proposition}\label{prop:freedisp}
For $g \in L^1(\bbH^{n+1})$,
\[
\abs{e^{-it\Delta}g(z)} \le C_n t^{-\frac32} 
\int_{\bbH^{n+1}} \brak{d(z,w)}^{n+1} e^{-\frac{n}2 d(z,w)} \abs{g(w)}\>dV(w).
\]
In particular, for $t \geq 1$
\[
\norm{e^{-it\Delta}}_{L^1 \to L^\infty} \le C_nt^{-\frac32}.
\]
\end{proposition}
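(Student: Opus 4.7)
My plan is to compute the Schwartz kernel $K_t(z, w)$ of the propagator $e^{-it\Delta}$ directly from the spectral resolution of $-\Delta$ via Stone's formula, and then extract the time decay from an oscillatory-integral argument in the spectral parameter. Parametrizing the critical line by $s = \tfrac{n}{2} + i\xi$ with $\xi \ge 0$, so that $s(n-s) = \tfrac{n^2}{4} + \xi^2$ traces out the continuous spectrum $[\tfrac{n^2}{4}, \infty)$ exactly once, Stone's formula yields
\begin{equation*}
K_t(z, w) = \frac{e^{-itn^2/4}}{\pi i}\int_0^\infty e^{-it\xi^2}\bigl[R_0(\tfrac{n}{2}+i\xi; z, w) - R_0(\tfrac{n}{2}-i\xi; z, w)\bigr]\,\xi\, d\xi.
\end{equation*}

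The next step is to invoke the explicit formula for the free resolvent kernel on $\bbH^{n+1}$, which depends only on $r = d(z, w)$ and is, up to normalization, a Gauss hypergeometric (or associated Legendre) function of $\cosh r$. The two features I would need to extract, which the paper develops in Section \ref{sec:specasym}, are: (i) the kernel $R_0(\tfrac{n}{2} + i\xi; z, w)$ is smooth in $\xi$ on all of $\bbR$, with $\xi$-derivatives controlled pointwise by a multiple of $\brak{r}^{n+1}e^{-(n/2)r}$; and (ii) the boundary-value difference $F(\xi; z, w) := R_0(\tfrac{n}{2}+i\xi; z, w) - R_0(\tfrac{n}{2}-i\xi; z, w)$ is odd in $\xi$, so that $\xi F(\xi; z, w)$ is even and vanishes like $\xi^2$ at the origin. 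Property (ii) encodes the well-known vanishing of the Plancherel density $|c(\xi)|^{-2}$ at the bottom of the spectrum, which the paper identifies as the source of the improved $t^{-3/2}$ (rather than $t^{-1/2}$) decay.

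Assuming (i) and (ii), the final step is a routine oscillatory-integral manipulation. Writing $\xi F(\xi; z, w) = \xi^2 G(\xi; z, w)$ with $G$ smooth and even in $\xi$, I would integrate by parts using $\xi e^{-it\xi^2} = -\frac{1}{2it}\partial_\xi e^{-it\xi^2}$; the boundary term at $\xi = 0$ vanishes because of the extra factor of $\xi$, and the one at infinity because of decay of $G$ and its derivatives. This gains one factor of $t^{-1}$ and leaves an integral of the form $\int_0^\infty H(\xi; z, w) e^{-it\xi^2}\,d\xi$ with $H$ smooth and integrable, which decays like $t^{-1/2}$ by the standard Fresnel/van der Corput estimate for a nondegenerate quadratic phase. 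Multiplying yields the desired $t^{-3/2}$ bound, with the spatial weight $\brak{r}^{n+1}e^{-(n/2)r}$ inherited pointwise from $G$ and its $\xi$-derivatives. Integrating against $g \in L^1(\bbH^{n+1})$ completes the proof.

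The main obstacle is step two: verifying properties (i) and (ii) from the explicit hyperbolic resolvent formula, uniformly in $r = d(z,w)$, with the stated polynomial--exponential control and enough regularity in $\xi$ to justify the integration by parts. Everything else is soft oscillatory-integral machinery, and indeed the same scheme explains the remark in the paper that a high-frequency cutoff would give $O(t^{-\infty})$: a smooth $\xi$-cutoff vanishing near $\xi = 0$ would allow arbitrarily many integrations by parts against the nondegenerate phase.
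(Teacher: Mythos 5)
Your scheme agrees with the paper's at the level of strategy — write the propagator kernel via Stone's formula on the critical line and exploit the second-order vanishing of the spectral density $\xi F(\xi;z,w)$ at $\xi=0$ to upgrade $t^{-1/2}$ to $t^{-3/2}$ — but the execution you sketch has a real gap in exactly the place you label "soft oscillatory-integral machinery." The difficulty is that neither $F$ nor your auxiliary $G(\xi;z,w) := F(\xi;z,w)/\xi$ is uniformly bounded in $\xi$ for fixed $r=d(z,w)$. On the critical line $\abs{\Gamma(\tfrac n2 + i\xi)}^2\sinh(\pi\xi)$ grows like $\xi^{n-1}$, while the Legendre factor $(\sinh r)^{-\mu}P_{-1/2+i\xi}^{-\mu}(\cosh r)$ decays only like $\xi^{-n/2}$, so $G$ grows roughly like $\xi^{n/2-2}$ at high frequency. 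Consequently your single integration by parts produces a boundary term at $\xi=\infty$ that does not vanish for $n\ge 4$, and the remaining integral $\int_0^\infty (G+\xi G')e^{-it\xi^2}d\xi$ is not absolutely convergent and is not of the form to which a one-shot van der Corput/Fresnel bound applies. Relatedly, your "property (i)" — that $\xi$-derivatives of the critical-line kernel are controlled pointwise by a $\xi$-independent multiple of $\brak{r}^{n+1}e^{-nr/2}$ — is not true: the estimates in Section~\ref{sec:specasym} (Corollary~\ref{R0.cor}) give bounds that grow polynomially in $\xi$, and the diagonal singularity $r^{1-n}$ is only cancelled after taking the \emph{imaginary part}, not for $R_0(\tfrac n2+i\xi)$ and $R_0(\tfrac n2-i\xi)$ separately.

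What the paper actually does is insert a specific integral representation for the Legendre function (the Mehler–Dirichlet formula, eq.~\eqref{legP.rep}), which rewrites $\im R_0(\tfrac n2+i\lambda;z,w)$ as a cosine transform in an auxiliary variable $u\in[0,r]$. The propagator then becomes $\int e^{it\lambda^2}\,\lambda A_n(\lambda)\,\hat h_z(\lambda)\,d\lambda$, i.e.\ an iterated one-dimensional Fourier transform, and the whole problem reduces to the elementary $\bbR^1$ free Schr\"odinger dispersive estimate (Lemma~\ref{disp.h}). This has two advantages that your sketch does not reproduce: (a) the polynomial growth of $A_n(\lambda)\sim\lambda^{n-1}$ is handled cleanly, because each extra power of $\lambda$ simply costs one extra $\brak{u}$-moment of $h_z$ rather than threatening convergence; and (b) the spatial weight $\brak{d(z,w)}^{n+1}e^{-\frac n2 d(z,w)}$ emerges for a transparent reason — Lemma~\ref{ghz.lemma} converts $\brak{u}^k$-moments of $h_z$ into $\brak{r}^{k+1}$-weights of $g$, with $k$ fixed by the degree $n+1$ of $\lambda A_n(\lambda)$ — whereas in your plan that specific power is simply asserted to be "inherited pointwise from $G$." To salvage your route you would at minimum need a low/high frequency decomposition (which you hint at in the last sentence), plus an argument that repeated $\lambda$-derivatives of the resolvent cost a polynomial factor in $r$ rather than the exponential loss recorded in \eqref{dR0.kernel}; the Mehler–Dirichlet representation is precisely what lets the paper avoid this bookkeeping.
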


By standard convention the resolvent of the Laplacian $-\Delta$ on $\bbH^{n+1}$ is written,
\[
R_0(s) := (-\Delta - s(n-s))^{-1},
\]
with $\re s > \tfrac{n}2$ corresponding to the resolvent set $s(n-s) \in \mathbb{C} - [\tfrac{n^2}4,\infty)$.  
The choice of $s$ as a spectral parameter is motivated by the hypergeometric formula for the kernel,
\[
R_0(s;z,w) =  \frac{\pi^{-\frac{n}2} 2^{-2s-1}\Gamma(s)}{\Gamma(s-\frac{n}2+1)} \cosh^{-2s}(\tfrac{r}2) 
F(s,s-\tfrac{n-1}2, 2s-n+1; \cosh^{-2}(\tfrac{r}2)),
\]
where $r := d(z,w)$.  In this case, $F$ is the representation of the standard hyergeometric function.
If we define $\nu := s - \tfrac{n+1}2$ and $\mu := \frac{n-1}2$, then this could also be written in terms of a Legendre function
\begin{equation}\label{R0.legendre}
R_0(s;z,w) = (2\pi)^{-\frac{n+1}2} e^{-i\pi\mu} (\sinh r)^{-\mu} Q_\nu^\mu(\cosh r).
\end{equation}
For convenience, we will use these assignments for $\nu$ and $\mu$ in all of the Legendre function formulas.

With the hyperbolic convention for the spectral parameter, 
Stone's formula gives the continuous part of the spectral resolution as
\[
d\Pi(\lambda) := 2i\lambda \left[R_0(\tfrac{n}2 + i\lambda) - R_0(\tfrac{n}2 - i\lambda)\right] \>d\lambda.
\]
Up to a simple factor, the kernel of the spectral resolution is thus given by $\im R_0(\tfrac{n}2 + i\lambda; z,w)$. 
By the Legendre connection formula,
\[
Q_{-\nu-1}^\mu(z) - Q_{\nu}^\mu(z) = e^{i\pi\mu} \cos(\pi \nu) \Gamma(\mu+\nu+1) \Gamma(\mu-\nu) P_{\nu}^{-\mu}(z),
\]
we have, for $\lambda \in \bbR$,
\begin{equation}\label{imR0}
R_0(\tfrac{n}2  + i\lambda;z,w) - R_0(\tfrac{n}2 - i\lambda;z,w) :=  A_n(\lambda) (\sinh r)^{-\mu} P_{\nu}^{-\mu}(\cosh r),
\end{equation}
where 
\begin{equation}\label{An.def}
A_n(\lambda) := c_n \abs{\Gamma(\tfrac{n}2 + i\lambda)}^2 \sinh (\pi \lambda).
\end{equation}
Note that by Stirling's formula, $A_n(\lambda) = O(\lambda^{n-1})$.  

The small-time estimate from \cite{Ban1} or \cite{IS} has the same form as Proposition~\ref{prop:freedisp}, 
except that the power on the right-hand side is $t^{-(n+1)/2}$.  This essentially follows from high-frequency estimates on the 
representation \eqref{imR0} which we will state more explicitly in the next section.

To develop the proof of Proposition~\ref{prop:freedisp}, for $g \in L^1(\bbH^{n+1})$, we can use the spectral resolution to write
\begin{equation}
\label{eqn:specrep_hom}
e^{-it\Delta}g(z) = \frac{e^{in^2/4}}{2\pi i} \int_{-\infty}^\infty \int_{\bbH^{n+1}} e^{i t \lambda^2} g(w) 
\> d\Pi(\lambda; z,w)\>dV(w).
\end{equation}
The novelty in our approach to the free case is the use of a particular formula for the Legendre function from \cite[\S3.7, eq.~(8)]{Erdelyi},
\begin{equation}\label{legP.rep}
P_{\nu}^{-\mu}(\cosh r) = \sqrt{\frac{2}{\pi}} \frac{(\sinh r)^{-\mu}}{\Gamma(\mu+\frac12)} 
\int_0^r (\cosh r - \cosh u)^{\mu-\frac12} \cos(\lambda u)\>du,
\end{equation}
valid for $\mu > -\tfrac12$ and $\lambda \in \bbR$.
In view of the representation \eqref{legP.rep}, we introduce the kernel
\begin{equation}
\label{Kdef}
K(u;r) := (\sinh r)^{-2\mu} (\cosh r - \cosh u)^{\mu-\frac12} \chi_{[0,r]}(u).
\end{equation}
\begin{lemma}\label{ghz.lemma}
For $g \in L^1(\bbH^{n+1})$ and $z \in \bbH^{n+1}$, set
\[
h_z(u) := \int_{\bbH^{n+1}} K(u; d(z,w)) g(w)\>dV(w).
\]
Then for any $k \ge 0$,
\[
\int_0^\infty \brak{u}^k \abs{h_z(u)}\>du \le c_n \int_{\bbH^{n+1}} \brak{d(z,w)}^{k+1} e^{-\frac{n}2 d(z,w)} \abs{g(w)}\>dV(w). 
\]
\end{lemma}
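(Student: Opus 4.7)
The plan is to apply Fubini's theorem to exchange the order of integration: since $K \geq 0$,
\[
\int_0^\infty \brak{u}^k \abs{h_z(u)}\,du \leq \int_{\bbH^{n+1}} |g(w)| \Bigl(\int_0^\infty \brak{u}^k K(u; d(z,w))\,du\Bigr) dV(w),
\]
so with $\mu = (n-1)/2$ and $r = d(z,w)$, it suffices to establish the pointwise estimate
\[
J(r) := (\sinh r)^{-(n-1)} \int_0^r \brak{u}^k (\cosh r - \cosh u)^{(n-2)/2}\,du \leq c_n \brak{r}^{k+1}\,e^{-nr/2}
\]
for all $r \geq 0$. For $r \leq 1$, a direct computation using $\sinh r \sim r$ and $\cosh r - \cosh u \sim (r^2-u^2)/2$ (with the substitution $u = r\sin\theta$ when convenient) shows $J(r) = O(1)$, which suffices since $\brak{r}^{k+1}e^{-nr/2}$ is bounded below by $e^{-n/2}$ on this range.

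For $r \geq 1$, the key tool is the half-angle identity
\[
\cosh r - \cosh u = 2 \sinh\!\bigl(\tfrac{r+u}{2}\bigr)\,\sinh\!\bigl(\tfrac{r-u}{2}\bigr),
\]
and I would split the $u$-integration at $r/2$. On $[0, r/2]$, the quantity $\cosh r - \cosh u$ is pinched between $\cosh r$ and $\cosh r - \cosh(r/2)$, both of which are $\asymp e^r$; combined with $(\sinh r)^{-(n-1)} \asymp e^{-(n-1)r}$ this gives the pointwise bound $\leq C e^{-nr/2}$ on the integrand, and $\int_0^{r/2}\brak{u}^k\,du$ contributes the factor $O(\brak{r}^{k+1})$. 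On $[r/2, r]$, applying the elementary bound $\sinh(\tfrac{r+u}{2}) \leq \tfrac12 e^{(r+u)/2}$ and substituting $v = r - u$, the $r$-dependent exponentials collapse to exactly $e^{-nr/2}$ and the remaining integral is
\[
\int_0^{r/2} \brak{r-v}^k\,e^{-v(n-2)/4}\,\sinh^{(n-2)/2}(v/2)\,dv.
\]
The exponential factor precisely balances the large-$v$ asymptotics of $\sinh^{(n-2)/2}(v/2)$ (which decays for $n=1$, equals $1$ for $n=2$, and grows for $n\geq 3$), so this integrand is uniformly bounded, apart from an integrable singularity at $v=0$ when $n=1$; together with $\brak{r-v}^k \leq \brak{r}^k$ on $[0,r/2]$, the $v$-integral is $O(\brak{r}^{k+1})$.

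The only point requiring any delicacy is the case $n = 1$: here $(\cosh r - \cosh u)^{-1/2}$ has an integrable singularity at $u = r$, and the post-substitution integrand carries the positive exponential $e^{v/4}$, which must be absorbed by $\sinh^{-1/2}(v/2) \sim \sqrt{2}\,e^{-v/4}$ for large $v$. Otherwise the argument is routine exponential bookkeeping, with the factorization of $\cosh r - \cosh u$ being the essential device that separates $r+u$ and $r-u$ and produces the $e^{-nr/2}$ weight exactly.
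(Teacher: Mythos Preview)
Your argument is correct. The Fubini reduction to the pointwise bound on $J(r)$ is sound, the $r\le 1$ analysis is fine (including the integrable endpoint singularity when $n=1$), and for $r\ge 1$ the half-angle factorization together with the split at $r/2$ gives exactly the $e^{-nr/2}$ weight, with the residual $v$-integral uniformly bounded in all dimensions (your remark about $n=1$ is accurate: one needs a lower bound $\sinh(\tfrac{r+u}{2})\ge c\,e^{(r+u)/2}$ there rather than the upper bound, but this holds since $(r+u)/2\ge 3r/4\ge 3/4$).

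The route, however, is genuinely different from the paper's. After the same Fubini step and the crude bound $\brak{u}^k\le\brak{r}^k$ on $[0,r]$, the paper \emph{recognizes} the remaining $u$-integral as the Legendre representation \eqref{legP.rep} evaluated at $\lambda=0$, i.e.\ $(\sinh r)^{-\mu}P^{-\mu}_{-1/2}(\cosh r)$, and then simply quotes its large-$r$ asymptotic $\sim c_n\,r\,e^{-nr/2}$ and its regularity at $r=0$. So the paper trades your elementary exponential bookkeeping for a one-line appeal to a special-function asymptotic that is already in play in \S\ref{sec:scalarhom}. Your approach is more self-contained and makes the mechanism producing $e^{-nr/2}$ explicit; the paper's is shorter and ties the estimate back to the same Legendre identity that motivated the definition of $K(u;r)$ in the first place.
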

\begin{proof}
The kernel $K(u;r)$ is comparable to $r^{-1} \chi_{[0,r]}(u)$ near $r=0$ and exponentially decreasing as $r \to \infty$.  Hence, 
for $g \in L^1(\bbH^{n+1})$ we can apply Fubini to compute
\[
\begin{split}
&\int_0^\infty \brak{u}^k \abs{h_z(u)}\>du \\
&\qquad= c_n\int_0^\infty (\sinh r)^{-2\mu}  \int_0^r (\cosh r - \cosh u)^{\mu-\frac12} \brak{u}^k 
\abs{\tilde g_z(r)} \sinh^n r\>du\>dr,
\end{split}
\]
where $\tilde g_z(r)$ denotes the average of $g(w)$ over a sphere of radius $r$ centered at the point $z$.  
We can then simply use the restriction $u\in [0,r]$ and \eqref{legP.rep} to estimate
\[
\int_0^\infty \brak{u}^k \abs{h_z(u)}\>du \le c_n\int_0^\infty \brak{r}^k (\sinh r)^{-\mu}  P^{-\mu}_{-\frac12}(\cosh r) \abs{\tilde g_z(r)} \sinh^n r \>dr.
\]
The function $(\sinh r)^{-\mu} P^{-\mu}_{-\frac12}(\cosh r)$ is regular at $r=0$ and has the asymptotic
\[
(\sinh r)^{-\mu} P^{-\mu}_{-\frac12}(\cosh r) \sim c_n r e^{-nr/2}\quad\text{as }r\to \infty.
\]
This yields
\[
\int_0^\infty \brak{u}^k \abs{h_z(u)}\>du \le c_n\int_0^\infty \brak{r}^{k+1} e^{-nr/2} \abs{\tilde g_z(r)} \sinh^n r \>dr,
\]
and the result follows.
\end{proof}

In terms of the function $h_z(u)$ introduced in Lemma~\ref{ghz.lemma}, we can now use \eqref{imR0} and \eqref{legP.rep} to 
rewrite \eqref{eqn:specrep_hom} as
\[
e^{-it\Delta}g(z) = c_n \int_{-\infty}^\infty \int_0^\infty e^{it\lambda^2} \lambda A_n(\lambda) \cos(\lambda u) h_z(u)\>du\>d\lambda,
\]
where $h_z$ is the function introduced in Lemma~\ref{ghz.lemma}.  For convenience, let us extend $h_z$ to $u <0$ as an even function,
$h_z(-u) := h_z(u)$, so that we can write this as
\begin{equation}\label{A.hz}
e^{-it\Delta} g(z) = c_n \int_{-\infty}^\infty e^{it\lambda^2}  \lambda A_n(\lambda) \hat{h}_z(\lambda)\>d\lambda.
\end{equation}
The coefficient $A_n(\lambda)$ is essentially a polynomial, so to handle this expression we first prove a lemma that illustrates how 
powers of $\lambda$ in the spectral resolution translate into decay in $t$.  In fact, we can simplify
the formula \eqref{An.def} for $A_n(\lambda)$ to 
\[
A_n(\lambda) = 
\begin{cases}
\bigl[a_{n}\lambda^{n} + \dots + a_1 \lambda \bigr] \tanh(\pi\lambda)& n\text{ odd},\\
a_{n}\lambda^{n} + \dots + a_2 \lambda^2 & n\text{ even}.
\end{cases}
\]

\begin{lemma}\label{disp.h}
If $h$ is a bounded smooth function with bounded derivatives, then for $h(u) \in \brak{u}^{-k} L^1(\bbR)$, 
we have for $t \geq 1$
\[
\abs{\int_{-\infty}^\infty e^{it\lambda^2} \lambda^k \hat{h}(\lambda)\>d\lambda} 
\le c_k t^{-\lfloor \frac{k+1}2 \rfloor-\frac12} \int_{-\infty}^\infty \brak{u}^k \abs{h(u)}\>du.
\]
\end{lemma}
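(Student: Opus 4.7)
The plan is to reduce the oscillatory integral to a finite sum of Gaussian integrals via completion of the square. First I would write $\hat{h}(\lambda) = \int e^{-i\lambda u} h(u)\,du$ and exchange the order of integration to obtain
\[
I(t) := \int_{-\infty}^\infty e^{it\lambda^2} \lambda^k \hat{h}(\lambda)\,d\lambda = \int_{-\infty}^\infty h(u) \left[ \int_{-\infty}^\infty e^{it\lambda^2 - i\lambda u} \lambda^k\,d\lambda \right] du.
\]
Since $\hat{h}$ need not be $L^1$ and $e^{it\lambda^2}$ is not integrable, this exchange is the one subtle point; I would justify it by inserting a Gaussian regularization $e^{-\varepsilon\lambda^2}$ in the $\lambda$ integral, which makes everything absolutely convergent, and then pass to the limit $\varepsilon \downarrow 0$ by dominated convergence on the (to be verified) absolutely convergent side.

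Next I would handle the inner integral by completing the square, $t\lambda^2 - \lambda u = t(\lambda - u/(2t))^2 - u^2/(4t)$, substituting $\mu = \lambda - u/(2t)$, and expanding $(\mu + u/(2t))^k$ by the binomial theorem. The integrals $\int_{-\infty}^\infty e^{it\mu^2}\mu^j\,d\mu$ vanish for odd $j$ by symmetry, and for even $j = 2m$ are explicit constants times $t^{-\frac12 - m}$. This gives
\[
\int_{-\infty}^\infty e^{it\lambda^2 - i\lambda u} \lambda^k\,d\lambda = e^{-iu^2/(4t)} \sum_{m=0}^{\lfloor k/2\rfloor} c_{k,m}\, u^{k-2m}\, t^{-(k-2m)}\, t^{-\frac12 - m}.
\]

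Then I would take absolute values and use the trivial bound $|u|^{k-2m} \leq \brak{u}^k$ for $0 \leq 2m \leq k$ to get
\[
|I(t)| \leq C_k \left( \sum_{m=0}^{\lfloor k/2\rfloor} t^{-k + m - \frac12} \right) \int_{-\infty}^\infty \brak{u}^k |h(u)|\,du.
\]
For $t \geq 1$ the sum is controlled by its slowest-decaying term, which is the one with $m = \lfloor k/2\rfloor$. A check of the two parities shows this exponent equals exactly $-\lfloor (k+1)/2\rfloor - \tfrac12$: when $k$ is even it is $-k/2 - \tfrac12$, and when $k$ is odd it is $-(k+2)/2$. This completes the estimate.

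The only real obstacle is the rigorous justification of the interchange of integrals in the first step, since neither $\lambda^k\hat{h}(\lambda)$ nor $e^{it\lambda^2}$ is individually integrable; the Gaussian regularization argument sidesteps this cleanly. After that step, the proof is essentially a direct computation, and all decay comes from the explicit Gaussian integrals combined with the prefactor $(u/(2t))^{k-2m}$.
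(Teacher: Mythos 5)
Your proof is correct and the arithmetic checks out, but it takes a genuinely different route from the paper. The paper proves the lemma by iterated integration by parts in $\lambda$, using $\lambda e^{it\lambda^2} = (2it)^{-1}\partial_\lambda e^{it\lambda^2}$ to write
\[
\int e^{it\lambda^2}\lambda^k\hat h\,d\lambda = -\frac{1}{2it}\int e^{it\lambda^2}\Bigl[(k-1)\lambda^{k-2}\hat h + \lambda^{k-1}\partial_\lambda\hat h\Bigr]\,d\lambda,
\]
and then recurses (noting $\partial_\lambda\hat h = \widehat{-iuh}$) until it hits the base cases $k=0$ and $k=1$, which it handles by quoting the standard one-dimensional dispersive bound $\abs{\int e^{it\lambda^2}\hat h\,d\lambda}\le Ct^{-1/2}\norm{h}_{L^1}$; the worst branch of the resulting tree of terms gives the stated $t^{-\lfloor(k+1)/2\rfloor-1/2}$ decay with weight $\brak{u}^k$. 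You instead bypass the recursion entirely: Fubini after Gaussian regularization, completion of the square, binomial expansion, and explicit evaluation of the Gaussian moments $\int e^{it\mu^2}\mu^{2m}\,d\mu\sim t^{-m-1/2}$. Both arguments ultimately rest on the Gaussian structure of the free propagator, but yours exposes it directly and shows at once exactly which power of $t$ and which power of $u$ each term carries, whereas the paper's version keeps the Gaussian hidden inside the quoted $k=0,1$ estimates and has to track powers through a branching recursion. Your approach is arguably more transparent; the cost is the one-time investment of justifying the Fubini interchange via the $e^{-\varepsilon\lambda^2}$ regularization, which is a standard maneuver and serves the same role as the paper's reduction to Schwartz $h$.
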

\begin{proof}
It suffices to consider a Schwarz function $h \in \mathcal{S}$.  Note that for $k=0$ the standard dispersive estimate for the free 
Schr\"odinger equation in $\bbR$ gives
\begin{equation}\label{disp.k0}
\abs{\int_{-\infty}^\infty e^{it\lambda^2} \hat{h}(\lambda)\>d\lambda} \le Ct^{-\frac12} \norm{h}_{L^1}.
\end{equation}
Similarly, if $k=1$ we can integrate by parts once to obtain
\[
\int_{-\infty}^\infty e^{it\lambda^2} \lambda \hat{h}(\lambda)\>d\lambda = 
- \frac{1}{2it} \int_{-\infty}^\infty e^{it\lambda^2} \del_\lambda\hat{h}(\lambda)\>d\lambda.
\]
And then, since $\mathcal{F}^{-1}(\hat h')(u) = -iu h(u)$, the dispersive bound \eqref{disp.k0} gives
\begin{equation}\label{disp.k1}
\abs{\int_{-\infty}^\infty e^{it\lambda^2} \lambda \hat{h}(\lambda)\>d\lambda}
\le Ct^{-\frac32} \norm{uh(u)}_{L^1}.
\end{equation}

For $k \ge 2$ integration by parts gives
\[
\int_{-\infty}^\infty e^{it\lambda^2} \lambda^k \hat{h}(\lambda)\>d\lambda = 
- \frac{1}{2it} \int_{-\infty}^\infty e^{it\lambda^2} \left[ (k-1)\lambda^{k-2} \hat{h}(\lambda) 
+ \lambda^{k-1} \del_\lambda\hat{h}(\lambda) \right]\>d\lambda.
\]
By iterating this formula we can reduce to a combination of the cases \eqref{disp.k0} or \eqref{disp.k1}.  
\end{proof}

%

\bigbreak
\begin{proof}[Proof of Proposition~\ref{prop:freedisp}]
The proof essentially follows from applying Lemma~\ref{disp.h} to \eqref{A.hz}.   For $n$ odd we define $f := \mathcal{F}^{-1}(\tanh(\pi\lambda)/\lambda)$, in the distributional sense.
Analyticity implies that $f$ is represented by an integrable function with exponential decay. 
Hence the map $h \mapsto f*h$ is bounded as a map
$\brak{u}^{-k} L^1(\bbR) \to \brak{u}^{-k} L^1(\bbR)$ for any $k$.   

Thus, in any dimension, we can apply Lemma~\ref{disp.h} to the polynomial terms in $\lambda A_n(\lambda)$.  
The $\lambda^2$ term fixes  the leading $t^{-\frac32}$ decay rate for large $t$, while the higher degree terms require additional decay of the function $h_z$.  The result is a pointwise bound,
\[
\abs{e^{-it\Delta}g(z)}  \le C_n t^{-\frac32} \int_{0}^\infty \brak{u}^n \abs{h_z(u)}\>du
\]
for large times.  An application of Lemma~\ref{ghz.lemma} completes the proof.
\end{proof}

\section{Free resolvent kernel estimates}
\label{sec:specasym}

To handle the case of $-\Delta + V$, we will need pointwise estimates on the free resolvent kernel.
For the scalar case, we only need consider $R_0(\tfrac{n}2 + \sigma)$ with $\sigma$ purely imaginary, but in the matrix case we will also need estimates positive real $\sigma$, so we will treat the general case $\re\sigma \ge 0$ below.

\begin{lemma}\label{PQ.lemma}
For $\mu$ fixed and $\arg \sigma \in [0,\tfrac{\pi}2]$, the Legendre functions can be estimated in terms of modified Bessel functions, 
\begin{equation}\label{PI.est}
P_{-\frac12 + \sigma}^{-\mu}(\cosh r) = \sigma^{-\mu} \left(\frac{r}{\sinh r}\right)^{\frac12} I_\mu(\sigma r) (1 + O_\mu(\sigma^{-1})),
\end{equation}
and
\begin{equation}\label{QK.est}
Q_{-\frac12 + \sigma}^{\mu}(\cosh r) =  e^{i\pi\mu} \sigma^{\mu} \left(\frac{r}{\sinh r}\right)^{\frac12} K_\mu(\sigma r) (1 + O(\sigma^{-1})),
\end{equation}
both uniformly for $r \in (0,\infty)$ for $\arg \sigma \in [0,\pi/2]$.  
\end{lemma}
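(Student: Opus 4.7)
These are classical uniform asymptotic expansions of associated Legendre functions in terms of modified Bessel functions; they fit within Olver's Liouville--Green framework for the Legendre equation. Rather than invoking that machinery, I would extract the leading order directly from integral representations and track the $O(\sigma^{-1})$ error by hand.

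For (\ref{PI.est}), the starting point is the Mehler-type formula (\ref{legP.rep}), analytically continued in the spectral parameter from $\sigma=i\lambda$ with $\lambda\in\RR$ to $\arg\sigma\in[0,\pi/2]$:
\[
P^{-\mu}_{-1/2+\sigma}(\cosh r) = \sqrt{\tfrac{2}{\pi}}\,\tfrac{(\sinh r)^{-\mu}}{\Gamma(\mu+1/2)}\int_0^r (\cosh r - \cosh u)^{\mu-1/2}\cosh(\sigma u)\,du.
\]
The algebraic workhorse is the factorization $\cosh r - \cosh u = \tfrac12 (r^2-u^2)\phi(r,u)$ with
\[
\phi(r,u) := \tfrac{\sinh((r+u)/2)}{(r+u)/2}\cdot\tfrac{\sinh((r-u)/2)}{(r-u)/2},
\]
a smooth, strictly positive function on $[0,\infty)\times[0,\infty)$ satisfying $\phi(r,r)=\sinh r/r$ and $\phi(0,0)=1$. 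Freezing $\phi(r,u)^{\mu-1/2}$ at its endpoint value $(\sinh r/r)^{\mu-1/2}$ in the Mehler integrand reduces the remaining integral to the Poisson representation
\[
I_\mu(\sigma r)=\tfrac{2(\sigma r/2)^\mu}{\sqrt{\pi}\,\Gamma(\mu+1/2)}\int_0^1 (1-t^2)^{\mu-1/2}\cosh(\sigma r t)\,dt,
\]
and collecting prefactors identifies this contribution as exactly the claimed main term $\sigma^{-\mu}(r/\sinh r)^{1/2}I_\mu(\sigma r)$.

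The error term is the integral of $(r^2-u^2)^{\mu-1/2}\bigl[\phi(r,u)^{\mu-1/2}-\phi(r,r)^{\mu-1/2}\bigr]\cosh(\sigma u)$. Since the bracketed factor vanishes to first order at $u=r$ and $\phi$ is smooth there, a single integration by parts against $\cosh(\sigma u)\,du=\sigma^{-1}\,d\sinh(\sigma u)$ kills the boundary contributions and gains a factor of $\sigma^{-1}$; this is the claimed relative remainder. For (\ref{QK.est}), I would run the parallel argument starting from the analogous integral representation of $Q^\mu_{-1/2+\sigma}(\cosh r)$ as a weighted integral of $e^{-\sigma u}$ over $u\in[r,\infty)$ (valid for $\re\sigma>0$), apply the same factorization near $u=r$, and match the leading integral to the Poisson formula
\[
K_\mu(z) = \tfrac{\sqrt{\pi}(z/2)^\mu}{\Gamma(\mu+1/2)}\int_0^\infty e^{-z\cosh s}\sinh^{2\mu}s\,ds
\]
for the Macdonald function to extract the main term $e^{i\pi\mu}\sigma^\mu(r/\sinh r)^{1/2}K_\mu(\sigma r)$.

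The main obstacle is uniformity of the $O(\sigma^{-1})$ bound in $r\in(0,\infty)$. Near $r=0$ this is a matching question: both sides of the proposed asymptotic must carry the same power of $r$, which follows from $\phi(0,0)=1$ together with the small-argument behaviors $I_\mu(z)\sim(z/2)^\mu/\Gamma(\mu+1)$ and $K_\mu(z)\sim\tfrac{1}{2}\Gamma(\mu)(z/2)^{-\mu}$. For $\sigma r$ large, the Laplace-type concentration at $u=r$ must not degrade as $r\to\infty$; this is ensured by the explicit product form of $\phi$, whose factors $\sinh(s)/s$ are bounded below by $1$ and have logarithmic derivatives of size $O(1)$ on $[0,\infty)$, so $\phi$ and its $u$-derivatives remain uniformly controlled on the triangle $\{0\le u\le r\}$ and the integration-by-parts constants do not grow with $r$.
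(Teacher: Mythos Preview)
Your approach is genuinely different from the paper's. The paper follows Olver's Liouville--Green method: it rewrites the Legendre equation via the substitution $W=(r\sinh r)^{1/2}L$, $\zeta=r^2$ as a perturbed Bessel equation with error potential $\psi(r)=\tfrac{4\mu^2-1}{16}(\sinh^{-2}r - r^{-2})$, makes the ansatz $W=rI_\mu(\sigma r)+r_\mu$, and obtains a Volterra integral equation for $r_\mu$ that is bounded using Olver's general theorem together with the fact that $\psi$ is monotone and integrable on $[0,\infty)$. Uniformity in $r$ is then automatic from the Volterra structure. Your route through the Mehler integral and the factorization $\cosh r-\cosh u=\tfrac12(r^2-u^2)\phi(r,u)$ is more elementary in spirit and makes the Bessel main term appear transparently via the Poisson integral.

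The weak point is the uniformity of the relative $O(\sigma^{-1})$ error. A single integration by parts gains $\sigma^{-1}$, but you still have to show that the resulting integral is bounded by the \emph{main} integral uniformly in $r$, not just bounded in absolute value. For $\re\sigma>0$ both integrals localize exponentially near $u=r$ and your control on $\partial_u\phi/\phi$ suffices, but on the boundary ray $\arg\sigma=\pi/2$ there is no Laplace localization: the main term becomes the oscillatory $J_\mu(|\sigma|r)$, which can be small by cancellation, and you need the post-IBP error to inherit the same cancellation pattern. Your last paragraph asserts this but does not argue it; you would need, for instance, to show that the differentiated integrand has the same endpoint singularity $(r-u)^{\mu-1/2}$ with a uniformly bounded coefficient, so that a second endpoint asymptotic of the same form applies. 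For the $Q$ case you only gesture at the analogous representation; the relevant Heine-type formula and the matching to the Macdonald Poisson integral should be written out, since the integration is now over $[r,\infty)$ and the uniformity near $r=\infty$ requires separate attention. The ODE approach in the paper sidesteps all of this because the error bound $|r_\mu|\le C\sigma^{-1}r|I_\mu(\sigma r)|$ is exactly a relative estimate, delivered directly by the Volterra machinery.
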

\begin{proof}
For the case of $\sigma \in (0,\infty)$, this is proven in \cite[\S12.12.3]{Olver}.  The estimate for the full range 
$\arg \sigma \in [0,\pi/2]$ essentially follows from the same approach, so we will only sketch the details.  

With either $L = \sigma^\mu P_{-\frac12 + \sigma}^{-\mu}(\cosh r)$ or 
$L = \sigma^{-\mu} Q_{-\frac12 + \sigma}^{\mu}(\cosh r)$, we set
\[
W = (r \sinh r)^\frac12 L, \qquad \zeta := r^2.
\]
The Legendre equation transforms into 
\[
\frac{d^2 W}{d\zeta^2} = \left( \frac{\sigma^2}{4\zeta} + \frac{\mu^2 - 1}{4\zeta^2} + \frac{\psi}{\zeta} \right) W,
\]
which is almost a Bessel equation except for the error term
\[
\psi(r) := \frac{4\mu^2-1}{16} \left( \frac{1}{\sinh^2 r} - \frac{1}{r^2} \right).
\]

Let us now specialize to the $P$ case.  If make the anszatz,
\[
W = \sigma^\mu (r \sinh r)^\frac12 P_{-\frac12 + \sigma}^{-\mu}(\cosh r) = r I_\mu(\sigma r) + r_\mu(\sigma,r),
\]  
then using the equation for $W$ and the boundary conditions appropriate to the $P$-solution, we can derive a recursive 
integral equation for the error term (see \cite[eq.~(12.03.08)]{Olver},
\[
r_\mu(\sigma,r) = 2r \int_0^r \Bigl[I_\mu(\sigma r) K_\mu(\sigma r) - K_\mu(\sigma r) I_\mu(\sigma t) \Bigr] \psi(t)
\Bigl(r_\mu(\sigma,t) + 2tI_\mu(\sigma t)\Bigr) dt.
\]
The key properties of $\psi$ that lead to an estimate on the error term are that $\psi$ is monotonic and integrable over $[0,\infty)$. Beyond this, we only need to use well known properties of the modified Bessel functions.  For $\mu >0$ as $z \to 0$
\begin{equation}\label{IK.zero}
I_\mu(z) \sim \frac{1}{\Gamma(\mu+1)} \left(\frac{z}2 \right)^\mu, \qquad K_\mu(z) \sim \frac12 \Gamma(\mu) \left(\frac{z}2 \right)^{-\mu}.
\end{equation}
For $\mu >0$ as $z \to \infty$, we have
\begin{equation}\label{IK.infty}
I_\mu(z) \sim (2\pi z)^{-\frac12} \left( e^z + ie^{\mu \pi i} e^{-z}\right), \qquad
K_\mu(z) \sim  \left(\frac{\pi}{2z}\right)^{\frac12} e^{-z},
\end{equation}
valid for $\arg z \in [0,\tfrac{\pi}2]$.  Using these estimates, together with the result in \cite[Thm.~12.3.1]{Olver}, we obtain a bound for the error term,
\[
r_\mu(\sigma,r) \le C_\mu r I_\mu(\sigma r) \sigma^{-1},
\]
which yields \eqref{PI.est}.

The same approach applies for the $Q$ solution.  The only notable difference in the argument is that the boundary 
conditions on the ansatz are applied at $r = \infty$ in this case.
\end{proof}

We note here that applying \eqref{PI.est} and \eqref{IK.infty} to \eqref{imR0} gives a high-frequency 
asymptotic formula for the spectral resolution, with a leading order of $\lambda^{(n+1)/2}$.  
This asymptotic is the essential input for the proof of the small-$t$ version of Proposition~\ref{prop:freedisp}, 
for example via the dyadic partition argument used in Lemma~\cite[Lemma~3.2]{IS}.  
To extend the small-$t$ results to the perturbed case would require corresponding results for
the perturbed spectral resolution for large $\lambda$.

From the uniform asymptotics of the Legendre functions we can derive pointwise bounds on the resolvent kernel and spectral resolution.  This bounds will be crucial for the dispersive estimates.  
\begin{corollary}\label{R0.cor}
For the free resolvent kernel we have the pointwise bounds,
\begin{equation}\label{R0.kernel}
\abs{R_0(\tfrac{n}2 + \sigma, z,w)}  \le \begin{cases} 
C \abs{\log r} & \abs{r\sigma} \le 1, n=1, \\
C_{n}  r^{1-n} & \abs{r\sigma} \le 1, n\ge 2, \\
C_{n} \abs{\sigma}^{n/2-1} e^{-(\frac{n}2 + \re \sigma) r} & \abs{r\sigma} \ge 1, \end{cases}
\end{equation}
where $r:= d(z,w)$, valid for $\re \sigma \ge 0$, $\abs{\sigma} \ge 1$, and $r \in (0,\infty)$.  
For derivatives with respect to $\sigma$ we have, for any $\epsilon >0$,
\begin{equation}\label{dR0.kernel}
\abs{\del_\sigma^m R_0(\tfrac{n}2 + \sigma, z,w)}  \le 
\begin{cases} 
C_{m} \abs{\log{r}} & \abs{r\sigma} \le 1, n=1, \\
C_{n,m}  r^{1-n} & \abs{r\sigma} \le 1, n\ge 2, \\
C_{n,m,\epsilon} \abs{\sigma}^{n/2-1} e^{-(\frac{n}2 + \re \sigma - \epsilon) r} & \abs{r\sigma} \ge 1, \end{cases}
\end{equation}
valid for $\re \sigma \ge 0$, $\abs{\sigma} \ge 1$, and $r \in (0,\infty)$.

For the imaginary part (on the critical line) the diagonal singularity is cancelled and we have the estimate, 
\begin{equation}\label{ImR0.kernel}
\abs{\del_\lambda^m \im R_0(\tfrac{n}2 + i\lambda, z,w)} \le C_n \abs{\lambda}^{n-1}, \qquad\text{for } \abs{r\lambda} \le 1
\end{equation}
valid for $\lambda \in \bbR$, $\abs{\lambda} \ge1$ and $r \in (0,\infty)$.
\end{corollary}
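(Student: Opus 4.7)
The plan is to start from the closed-form representation \eqref{R0.legendre} of the free resolvent kernel and apply Lemma~\ref{PQ.lemma} combined with the classical small/large argument asymptotics \eqref{IK.zero}, \eqref{IK.infty} of the modified Bessel functions. With $s = \tfrac{n}{2} + \sigma$, one has $\nu = -\tfrac12 + \sigma$, so
\[
R_0(\tfrac{n}2 + \sigma; z,w) = c_n (\sinh r)^{-\mu} Q_{-\frac12+\sigma}^{\mu}(\cosh r),
\]
and Lemma~\ref{PQ.lemma} substitutes this for
\[
R_0(\tfrac{n}2 + \sigma; z,w) = c_n \sigma^{\mu} (\sinh r)^{-\mu-\frac12} r^{\frac12}\, K_\mu(\sigma r)\bigl(1 + O(\sigma^{-1})\bigr),
\]
uniformly in $r \in (0,\infty)$ for $\arg\sigma\in[0,\tfrac{\pi}{2}]$, $|\sigma|\ge 1$.

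First I would handle the low region $|r\sigma|\le 1$ by inserting the small-argument asymptotic \eqref{IK.zero}. For $n\ge 2$ this gives $K_\mu(\sigma r)\sim c_\mu (\sigma r)^{-\mu}$, and the powers of $\sigma$ cancel to leave $R_0 \lesssim (\sinh r)^{-\mu-\frac12} r^{\frac12 - \mu} \lesssim r^{-2\mu} = r^{1-n}$ (using $\sinh r \sim r$ in the relevant bounded range). For $n=1$, $\mu = 0$ and $K_0(z)\sim -\log z$ produces the $|\log r|$ bound. For the high region $|r\sigma|\ge 1$, the large-argument asymptotic \eqref{IK.infty} gives $K_\mu(\sigma r)\sim (\pi/(2\sigma r))^{1/2} e^{-\sigma r}$; combining this with $(\sinh r)^{-\mu - 1/2}\lesssim e^{-(n/2)r}$ for $r \gtrsim 1$ yields $|R_0| \lesssim |\sigma|^{(n-2)/2} e^{-(n/2+\re\sigma)r}$, which is \eqref{R0.kernel}.

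For the derivative bounds \eqref{dR0.kernel}, the cleanest route is Cauchy's integral formula: since $\sigma\mapsto R_0(\tfrac{n}{2}+\sigma; z,w)$ is holomorphic in the interior of the sector $\arg\sigma\in[0,\tfrac{\pi}{2}]$, integrating on a circle of small radius $\delta=\delta(\epsilon)$ around $\sigma$ gives
\[
|\del_\sigma^m R_0(\tfrac{n}2+\sigma)| \le \frac{m!}{\delta^m} \sup_{|\zeta - \sigma|=\delta} |R_0(\tfrac{n}2+\zeta)|.
\]
In the high region, applying \eqref{R0.kernel} to the supremum and noting $|\zeta|\lesssim |\sigma|$, $\re\zeta\ge \re\sigma-\delta$ produces the desired bound with the factor $e^{\delta r}$ absorbed into the $\epsilon$ loss. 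In the low region $|r\sigma|\le 1$ the stated bounds are $\sigma$-independent, so the same Cauchy argument (or direct differentiation of the $K_\mu$ representation, using $\del_z K_\mu$ formulas) suffices.

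For \eqref{ImR0.kernel}, I would invoke the identity \eqref{imR0}, which replaces $Q$ by the Legendre function $P_{-\frac12+i\lambda}^{-\mu}$ and kills the diagonal singularity. Lemma~\ref{PQ.lemma} with $\sigma = i\lambda$ and the small-argument asymptotic $I_\mu(i\lambda r)\sim c_\mu (i\lambda r)^\mu$ reduces the Legendre factor to $O(r^\mu)$ on the regime $|r\lambda|\le 1$, which cancels the $(\sinh r)^{-\mu}$ prefactor. Combining with $|A_n(\lambda)|\lesssim |\lambda|^{n-1}$ (Stirling applied to \eqref{An.def}) produces the bound with $m=0$; derivatives in $\lambda$ are handled by the same Cauchy trick on a circle of fixed radius inside the strip where the Legendre representation and its asymptotic both remain valid.

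The main technical point will be ensuring that the circles used in the Cauchy estimates stay within the domain of analyticity \emph{and} within the region where Lemma~\ref{PQ.lemma} applies with uniform constants; this is delicate when $\sigma$ is near the imaginary axis (for \eqref{dR0.kernel}) or when $\lambda$ is small (for \eqref{ImR0.kernel}), but the hypothesis $|\sigma|\ge 1$, $|\lambda|\ge 1$ gives enough room to choose the radius small and independent of $r$.
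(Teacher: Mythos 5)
Your $m=0$ argument is essentially the same as the paper's (applying Lemma~\ref{PQ.lemma} together with \eqref{IK.zero}, \eqref{IK.infty} to \eqref{R0.legendre}), and the identification of the $\mu=0$ log case and of the bookkeeping of powers of $\sigma$ and $\sinh r$ is correct. However, for the derivative bounds \eqref{dR0.kernel}, the Cauchy-integral step has a genuine gap that you flag but do not actually close. When $\re\sigma=0$ (which is the case relevant to the spectral measure estimates later in the paper), \emph{any} circle of positive radius $\delta$ around $\sigma$ intersects the half-plane $\{\re\zeta<0\}$. Lemma~\ref{PQ.lemma} is proved only on the sector $\arg\sigma\in[0,\tfrac{\pi}{2}]$ (hence, by conjugation, $\re\sigma\ge 0$), so the pointwise bound \eqref{R0.kernel} that you invoke on the supremum $\sup_{|\zeta-\sigma|=\delta}$ is simply not available on the left half of the circle. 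The hypothesis $|\sigma|\ge 1$ gives radial room but no angular room; it does not address the $\re\zeta<0$ arc.

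The paper resolves exactly this point with the $Q$-Legendre three-term recurrence, which it rewrites as a recurrence for the resolvent kernel:
\[
R_0(\tfrac{n}{2}+\sigma-1;z,w)=\tfrac{2\sigma}{\frac{n}{2}+\sigma-1}(\cosh r)\,R_0(\tfrac{n}{2}+\sigma;z,w)+\tfrac{\sigma-\frac{n}{2}+1}{\frac{n}{2}+\sigma-1}R_0(\tfrac{n}{2}+\sigma+1;z,w).
\]
Since the right-hand side only involves $\re\sigma\ge 1-\epsilon$, where Lemma~\ref{PQ.lemma} is safely applicable, one transfers the kernel bounds to the strip $\re\sigma\in[-\epsilon,\epsilon]$ (the $\cosh r$ factor is responsible for the $\epsilon$-loss in the exponent). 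Adding this step to your Cauchy argument would complete the proof of \eqref{dR0.kernel}; the same device handles \eqref{ImR0.kernel}, which also relies on analytic continuation slightly past the critical line. A secondary remark: your large-$|r\sigma|$ computation explicitly restricts to $r\gtrsim 1$ when replacing $(\sinh r)^{-\mu-\frac12}$ by $e^{-\frac{n}{2}r}$; as stated, that substitution is not valid for small $r$ in the regime $|r\sigma|\ge 1$ (where $(\sinh r)^{-n/2}\sim r^{-n/2}$), so a word on the matching across $|r\sigma|\approx 1$ would tighten the write-up.
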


\begin{proof}
By the conjugation symmetry, it suffices to consider $\arg \sigma \in [0,\tfrac{\pi}2]$.

With $m=0$, the estimate \eqref{R0.kernel} follows from applying \eqref{QK.est}, \eqref{IK.zero}, and \eqref{IK.infty}
to \eqref{R0.legendre}.  The cases with $m>0$ essentially follow from analyticity and Cauchy's integral formula on a disk of radius $\epsilon$ centered at $\lambda$.  
For the cases with $\re \sigma =0$, this Cauchy estimate requires extending 
slightly beyond the range of Lemma~\ref{PQ.lemma}.  This is easily accomplished using 
the standard connection formula for the $Q$-Legendre function.  For the free resolvent kernel, the Legendre connection formula implies that
\[
\begin{split}
R_0(\tfrac{n}{2} +\sigma - 1; z,w) &= \frac{2\sigma}{\frac{n}2 + \sigma - 1} (\cosh r) R_0(\tfrac{n}{2} +\sigma;z,w) \\
&\qquad+ \frac{\sigma - \frac{n}2 + 1}{\frac{n}2 + \sigma - 1} R_0(\tfrac{n}{2} +\sigma + 1;z,w).
\end{split}
\]
This allows us to simply push the estimates for $\re \sigma \in [-\epsilon, \epsilon]$ to 
$\re \sigma \in [1-\epsilon,1+ \epsilon]$.

We establish \eqref{ImR0.kernel} in the same way, starting from \eqref{imR0} and using the P-Legendre asymptotics.
\end{proof}

The estimate on derivatives in Corollary~\ref{R0.cor} is not sharp, in the sense that 
the derivatives in \eqref{dR0.kernel} should cause a polynomial loss of decay in $r$, rather than exponential.
The extra level of precision would however be irrelevant for our application.

For $\abs{\sigma}\le 1$ the corresponding estimates can be derived much more directly from Legendre function asymptotics for fixed order.  
For later use, we note these in the following:
\begin{lemma}\label{R0.bottom}
Near $\sigma =0$ we have the bounds,
\begin{equation}\label{R0.kernelb}
\abs{R_0(\tfrac{n}2 + \sigma, z,w)}  \le \begin{cases} 
C \abs{\log r} & r \le 1, n=1, \\
C_{n}  r^{1-n} & r \le 1, n\ge 2, \\
C_{n} \abs{\sigma}^{n/2-1} e^{-(\frac{n}2 + \re \sigma) r} & r \ge 1, \end{cases}
\end{equation}
where $r :=d(z,w)$, valid for $\re \sigma \ge 0$, $\abs{\sigma} \le 1$, and $r \in (0,\infty)$.  
For any $\epsilon >0$,
\begin{equation}\label{dR0.kernelb}
\abs{\del_\sigma^m R_0(\tfrac{n}2 + \sigma, z,w)}  \le 
\begin{cases} 
C_{m} \abs{\log{d(z,w)}} & r \le 1, n=1, \\
C_{n,m}  d(z,w)^{1-n} & r \le 1, n\ge 2, \\
C_{n,m,\epsilon} \abs{\sigma}^{n/2-1} e^{-(\frac{n}2 + \re \sigma - \epsilon) r} & r \ge 1, \end{cases}
\end{equation}
valid for $\re \sigma \ge 0$, $\abs{\sigma} \le1$, and $r \in (0,\infty)$.

For the imaginary part we have 
\begin{equation}\label{ImR0.kernelb}
\abs{\del_\lambda^m \im R_0(\tfrac{n}2 + i\lambda, z,w)} \le C_n \abs{\lambda}^{n-1}, \qquad\text{for } \abs{r\lambda} \le 1
\end{equation}
valid for $\lambda \in \bbR$, $\abs{\lambda} \le 1$.
\end{lemma}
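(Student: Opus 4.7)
The plan is to mirror the strategy of Corollary~\ref{R0.cor}, replacing the uniform Bessel-type asymptotics of Lemma~\ref{PQ.lemma} with the classical fixed-order asymptotic expansions of the Legendre functions $Q_\nu^\mu$ and $P_\nu^{-\mu}$. Since $|\sigma| \le 1$, the order $\nu = \sigma - \tfrac12$ lies in a compact set, so the constants in the standard fixed-order expansions are uniform in $\sigma$.

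Starting from \eqref{R0.legendre}, I would split according to whether $r$ is small or large. For $r \le 1$ and $\mu > 0$ (i.e.\ $n \ge 2$), the classical small-argument expansion yields $Q_\nu^\mu(\cosh r) \sim \tfrac12 e^{i\pi\mu} \Gamma(\mu)(\cosh r - 1)^{-\mu/2}$, and multiplication by $(\sinh r)^{-\mu}$ produces the $r^{1-n}$ singularity. In the exceptional case $n = 1$ (so $\mu = 0$), $Q_\nu^0$ has a logarithmic leading singularity, yielding the $|\log r|$ bound. For $r \ge 1$, the large-argument asymptotic
\[
Q_\nu^\mu(\cosh r) \sim \sqrt{\tfrac{\pi}{2}}\, e^{i\pi\mu}\, \frac{\Gamma(\nu + \mu + 1)}{\Gamma(\nu + \tfrac32)}\, (2\cosh r)^{-\nu - 1},
\]
combined with $(\sinh r)^{-\mu} \sim 2^\mu e^{-\mu r}$, gives exponential decay at the rate $\mu + \nu + 1 = \tfrac{n}{2} + \sigma$. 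The $|\sigma|^{n/2 - 1}$ prefactor in \eqref{R0.kernelb} serves as a convenient uniform bound that matches the large-$|\sigma|$ estimate \eqref{R0.kernel} at $|\sigma| = 1$.

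The derivative bound \eqref{dR0.kernelb} follows by Cauchy's integral formula: for $|\sigma_0| \le 1$ and $m \ge 0$,
\[
\partial_\sigma^m R_0(\tfrac{n}{2} + \sigma_0; z, w) = \frac{m!}{2\pi i} \oint_{|\sigma - \sigma_0| = \epsilon} \frac{R_0(\tfrac{n}{2} + \sigma; z, w)}{(\sigma - \sigma_0)^{m+1}}\, d\sigma.
\]
Applying the just-established bound on $R_0$ along the contour yields the derivative estimate, with the $\epsilon$-loss in the exponential decay rate reflecting the variation of $\re \sigma$ around the circle. When $\sigma_0$ is close to the imaginary axis, first shift using the Legendre connection formula recalled in the proof of Corollary~\ref{R0.cor}, so that the entire contour sits in a region where the fixed-order $Q$-function expansions apply directly.

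For the imaginary-part estimate \eqref{ImR0.kernelb}, I would use \eqref{imR0}. In the regime $|r\lambda| \le 1$, the integral representation \eqref{legP.rep} together with $|\cos(\lambda u)| \le 1$ gives a uniform bound
\[
\bigl| (\sinh r)^{-\mu} P_\nu^{-\mu}(\cosh r) \bigr| \le C_n,
\]
so the diagonal singularity is cancelled. Combining this with the explicit formula \eqref{An.def} for $A_n(\lambda)$ over $|\lambda| \le 1$ produces the stated bound, and $\lambda$-derivatives follow by a further application of Cauchy's integral formula. The main technical obstacle is the uniform control of constants across the whole range $|\sigma| \le 1$, especially when running the Cauchy-integral argument near the imaginary axis: one must invoke the connection-formula shift to extend the contour into a region where the fixed-order Legendre expansions remain valid, while carefully tracking prefactors so that they match the form stated in \eqref{R0.kernelb}--\eqref{ImR0.kernelb}.
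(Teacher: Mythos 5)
The paper gives no worked proof of this lemma; it simply remarks that ``the corresponding estimates can be derived much more directly from Legendre function asymptotics for fixed order.'' Your strategy — fixed-order small- and large-argument expansions of $Q_\nu^\mu$, Cauchy's integral formula (with the connection-formula shift near the imaginary axis) for the $\sigma$-derivatives, and the integral representation \eqref{legP.rep} for the imaginary part — is exactly the route that remark points to, so in outline you are on the paper's track.

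There is, however, a real gap in how you handle the $\sigma$- and $\lambda$-prefactors, and it is worth flagging because it also exposes a likely misprint in the lemma itself. Your own computation gives, for $r\ge 1$, the bound
\[
\abs{(\sinh r)^{-\mu} Q^\mu_{-\frac12+\sigma}(\cosh r)} \le C_n\, \frac{\Gamma(\re\sigma+\tfrac{n}2)}{\Gamma(\re\sigma+1)}\, e^{-(\frac{n}2+\re\sigma)r},
\]
whose $\sigma$-dependent factor is bounded above \emph{and below} by positive constants on $\{\re\sigma\ge 0,\ \abs{\sigma}\le 1\}$; in particular the kernel does \emph{not} vanish as $\sigma\to 0$. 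But for $n\ge 3$ one has $\abs{\sigma}^{n/2-1}\le 1$ with $\abs{\sigma}^{n/2-1}\to 0$ as $\sigma\to 0$, so the prefactor in \eqref{R0.kernelb} is a \emph{stronger} claim than your expansion delivers, not a ``convenient uniform bound'' — your remark that it merely ``matches the large-$\abs{\sigma}$ estimate at $\abs{\sigma}=1$'' has the inequality running the wrong way. The same issue appears in \eqref{ImR0.kernelb}: from \eqref{imR0}, \eqref{An.def} and $\abs{\Gamma(\tfrac{n}2)}>0$ one gets $A_n(\lambda)=O(\lambda)$ as $\lambda\to 0$ for every $n$, so $\del_\lambda^m\im R_0 = O(1)$ (and at best $O(\lambda^{1-m})$) uniformly for $\abs{\lambda}\le 1$; the asserted $\abs{\lambda}^{n-1}$ vanishing is not produced by your argument for $n\ge 3$, and indeed cannot hold. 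What your proof actually establishes is the bound with $C_n e^{-(\frac{n}2+\re\sigma)r}$ in place of $C_n\abs{\sigma}^{n/2-1}e^{-(\frac{n}2+\re\sigma)r}$, and with $C_n$ in place of $C_n\abs{\lambda}^{n-1}$. That is almost certainly what was intended — the statement is a near-verbatim copy of Corollary~\ref{R0.cor}, where $\brak{\lambda}^{n-1}$ rather than $\abs{\lambda}^{n-1}$ appears, and these are interchangeable only for $\abs{\lambda}\ge 1$ — and it is the weaker bound that is actually used in Lemma~\ref{R0.Lp.lemma} (whose output is in terms of $\brak{\lambda}^{n-1}$). So your approach is sound, but you should state the corrected prefactors rather than attempt to justify the stronger vanishing asserted in \eqref{R0.kernelb} and \eqref{ImR0.kernelb}, which your argument does not and cannot supply for $n\ge 3$.
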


\section{Resolvent operator estimates}
\label{spectheory}

In this section we establish some weighted operator-norm estimates for the free resolvent and the perturbed resolvent in 
$\HS^{n+1}$.  As noted in the Introduction, the weights are expressed in terms of the function
\[
\rho := e^{-r},
\]
where $r$ is the radial coordinate in geodesic polar coordinates for $\bbH^{n+1}$. 

The first estimate is a slight extension of Guillarmou \cite[Prop.~3.2]{guillarmou2005absence}.  
We will include a proof for the convenience of the reader, but it follows the original proof fairly closely.  

\begin{proposition}\label{R0.bounds}
For the boundary defining function $\rho = e^{-r}$, and with $\eta>0$, 
and $\lambda \in \bbR$, we have
\[
\norm{\rho^{\eta} \del_\lambda^q R_0(\tfrac{n}2 + i\lambda) \rho^\eta}_{L^2 \to L^2} \le C_{q,\eta} \abs{\lambda}^{-1}.
\]
\end{proposition}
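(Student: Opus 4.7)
The estimate is a weighted limiting-absorption-principle bound for $R_0(s)$ on the critical line $\re s = n/2$, including spectral derivatives. I would split according to $|\lambda|$. For $|\lambda|$ in a compact subset of $\RR\setminus\{0\}$, the meromorphic continuation of $R_0(s)$ noted in the Introduction (as a bounded operator $\rho^\eta L^2 \to \rho^{-\eta}L^2$), together with the absence of free-resolvent poles on the critical line away from $s = n/2$, makes $\lambda \mapsto \rho^\eta \partial_\lambda^q R_0(\tfrac{n}{2}+i\lambda)\rho^\eta$ a continuous map of any such compact set into the bounded operators on $L^2$, so the $|\lambda|^{-1}$ bound is automatic. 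For $0 < |\lambda| \le 1$, Lemma~\ref{R0.bottom} provides pointwise kernel bounds that are locally integrable near the diagonal (logarithmic for $n=1$, $r^{1-n}$ for $n\ge 2$) and exponentially decaying at infinity; Schur's test applied to the weighted kernel $e^{-\eta r(z)} \partial_\lambda^q R_0(\tfrac{n}{2}+i\lambda;z,w) e^{-\eta r(w)}$, combined with the triangle inequality $r(z)+r(w)\ge d(z,w)$ to convert the two weights into an $e^{-\eta d(z,w)}$ factor of the kernel, yields a uniform $L^2\to L^2$ bound.

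The genuinely nontrivial regime is $|\lambda|\ge 1$. Here the naive Schur test fails: the pointwise bound $|\partial_\lambda^q R_0| \le C|\lambda|^{n/2-1}e^{-(n/2-\epsilon)r}$ from Corollary~\ref{R0.cor} is not integrable against the hyperbolic volume $\sinh^n r \sim e^{nr}$, regardless of how one distributes the weights. One must exploit the oscillation $e^{-i\lambda r}$ encoded in the full asymptotic
\[
R_0(\tfrac{n}{2}+i\lambda;z,w) \sim c_n\, \lambda^{n/2-1}\, r^{-n/2}\, e^{-(n/2+i\lambda)r}, \qquad r|\lambda|\gg 1.
\]
I would do so by passing to the Helgason--Fourier transform on $\bbH^{n+1}$, which diagonalises $R_0(\tfrac{n}{2}+i\lambda)$ as multiplication by $(\mu^2 - \lambda^2 - i0)^{-1}$ in the spectral variable $\mu$. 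The weighted operator bound then reduces to a Plancherel-plus-Sokhotski--Plemelj estimate: a single factor of $|\lambda|^{-1}$ arises from the singularity of $(\mu^2-\lambda^2-i0)^{-1}$ at $\mu = \pm\lambda$, while the Helgason transform of an $\rho^\eta$-weighted function is uniformly bounded in $\mu$ by standard Poisson-kernel mapping properties. The $q$-fold derivatives in $\lambda$ only produce polynomial factors in $r$ in physical space, which are absorbed into a small portion of the exponential weight $\rho^\eta$ via the $\epsilon$-freedom built into (\ref{dR0.kernel}).

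The main obstacle is extracting the sharp $|\lambda|^{-1}$ decay at large $|\lambda|$, as opposed to a merely uniform bound. This is genuinely oscillatory LAP-type input, and is the core estimate that powers the dispersive bounds later in the paper. Either the Helgason-transform route above, or a direct stationary-phase analysis of the weighted kernel in the spirit of Guillarmou \cite{guillarmou2005absence}, should suffice; in either case the extra spectral derivatives $\partial_\lambda^q$ cost only constants, not decay, thanks to the $\epsilon$-slack in the weights.
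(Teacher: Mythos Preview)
Your approach is plausible in outline but takes a genuinely different route from the paper's, and the high-frequency step is underspecified. The paper (following Guillarmou's original argument, which is \emph{not} stationary phase) does not touch the Helgason--Fourier transform at all. Instead it writes the resolvent via the cosine wave propagator $U_0(t) = \cos\bigl(t\sqrt{-\Delta - n^2/4}\bigr)$:
\[
\rho^\eta R_0(\tfrac{n}{2}+i\lambda)\rho^\eta \;=\; \frac{1}{i\lambda}\int_0^\infty e^{-it\lambda}\,\rho^\eta U_0(t)\rho^\eta\,dt.
\]
The factor $|\lambda|^{-1}$ drops out of this formula directly; the only remaining task is to show $\|\rho^\eta U_0(t)\rho^\eta\|_{L^2\to L^2} \le Ce^{-\eta t/4}$. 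This follows from finite propagation speed: in odd dimension $n+1$, Huygens' principle gives $\chi_t U_0(t)\chi_t = 0$ for radial cutoffs $\chi_t$ supported in $\{r\le t/2\}$, and decomposing $1 = \chi_t + (1-\chi_t)$ together with $\|(1-\chi_t)\rho^\eta\|_\infty \le e^{-\eta t/4}$ finishes it. Derivatives $\partial_\lambda^q$ just bring down powers of $t$ under the integral, absorbed by the exponential decay. (Even dimensions need a small modification using the explicit sine-wave kernel.) No splitting into frequency regimes is needed; the argument is uniform over $\lambda\in\bbR\setminus\{0\}$.

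By contrast, your Fourier-side sketch leaves the crucial step unjustified: after extracting $\tfrac{1}{2\lambda}$ from $(\mu^2-\lambda^2-i0)^{-1}$, you still must show that $\rho^\eta(\sqrt{-\Delta-n^2/4}-\lambda-i0)^{-1}\rho^\eta$ is \emph{uniformly} bounded in $\lambda$, which is itself a limiting-absorption statement rather than a consequence of ``Poisson-kernel mapping properties.'' The wave-propagator route bypasses this circularity entirely.
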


\begin{proof}
Define a family of radial cutoffs $\chi_t \in \cinf_0(\bbH^{n+1})$ such that, 
\[
\chi_t(z) = \begin{cases}0 & r \ge t/2, \\
1 & r \le t/4,
\end{cases}
\]
with $r := d(z,0)$.   

For the odd-dimensional case, we can use the cosine wave operator,
\[
U_0(t) :=  \cos\left(t \sqrt{-\Delta - \tfrac{n^2}{4}}\right).
\]
If $n+1$ is odd then the support of $U(t;z,w)$ is restricted to $\{d(z,w) = t\}$ by Huygen's principle, so that
\[
\chi_t U_0(t) \chi_t = 0,\quad\text{for }t>0.
\]
For $\eta>0$, we can use this to subdivide $\rho^\eta U_0(t) \rho^\eta$ as
\begin{equation}\label{rUr}
\begin{split}
\rho^\eta U_0(t) \rho^\eta &= (1-\chi_t) \rho^\eta U_0(t) (1-\chi_t) \rho^\eta + (1-\chi_t) \rho^\eta U_0(t) \chi_t \rho^\eta \\
&\qquad + \chi_t \rho^\eta U_0(t) (1-\chi_t) \rho^\eta.
\end{split}
\end{equation}
Note that for $z \in \supp (1-\chi_t)$, we have $r \ge t/4$, implying $\rho \le e^{-t/4}$.  This gives a bound
\[
\norm{(1-\chi_t) \rho^\eta}_\infty \le e^{-\eta t/4}.
\]
Since we also have $\norm{U_0(t)} \le 1$ and $\norm{\chi_t \rho^\eta}_\infty \le 1$, we deduce from \eqref{rUr} that
\[
\norm{\rho^\eta U_0(t) \rho^\eta} \le 3e^{-\eta t/4}.
\]
By the functional calculus,
\[
\rho^\eta R_0(\tfrac{n}2 + i\lambda) \rho^\eta = \frac{1}{i\lambda} \int_0^\infty e^{-it\lambda} \rho^\eta U_0(t) \rho^\eta\>dt.
\]
Hence we conclude for $\lambda \in \bbR$ that
\[
\norm{\rho^\eta R_0(\tfrac{n}2 + i\lambda) \rho^\eta} \le C\eta^{-1} \abs{\lambda}^{-1}.
\]
The same argument shows that
\[
\norm{\del_\lambda^q \rho^\eta R_0(\tfrac{n}2 + i\lambda) \rho^\eta} \le C_q\eta^{-(q+1)} \abs{\lambda}^{-1}.
\]

When the dimension is even, we start from the sine wave operator $U_1(t)$, 
related to the cosine operator by $\del_t U_1(t) = U_0(t)$.   For $n+1$ even the integral kernel is given by 
\begin{equation}\label{U1.even}
U_1(t,z,w) := C_n \left[ \sinh^2(t/2) - \sinh^2(d(z,w)/2) \right]_+^{-n/2}.  
\end{equation}
By writing 
\[
\chi_t U_0(t) \chi_t = \del_t(\chi_t U_1(t) \chi_t) - (\del_t\chi_t)U_1(t) \chi_t - \chi_t U_1(t) (\del_t\chi_t),
\]
we can conclude that $\chi_t U_0(t) \chi_t$ has smooth integral with support restricted to $d(z,w) \le t/2$.  
Using this restriction in conjunction with the formula \eqref{U1.even} gives
\begin{equation}\label{cUc}
\norm{\chi_t U_0(t) \chi_t} \le Ce^{-nt/2},
\end{equation}
for $t$ sufficiently large.  We now proceed as in the odd dimensional case.  
The expansion corresponding to \eqref{rUr} now has an extra term involving $\chi_t U_0(t) \chi_t$, which is controlled by 
\eqref{cUc}.  Assuming $\eta \le 2n$, we obtain the estimate 
\[
\norm{\rho^\eta U_0(t) \rho^\eta} \le Ce^{-\eta t/4},
\]
and the rest of the proof follows exactly as in the odd dimensional case.
\end{proof}

\bigbreak
For a potential $V \in \rho^\alpha L^\infty(\bbH^{n+1})$ with $\alpha > 0$, the operator norm $\norm{VR_0(s)}$ is small for 
$\re s$ large by the standard resolvent norm estimate on $R_0(s)$.  
Hence, the operator $1 + VR_0(s)$ is invertible by Neumann series for large $\re s$.  For $s$ in this range, 
the resolvent identity gives
\[
\begin{split}
R_V(s) &:=  (-\Delta +V - s(n-s))^{-1}\\
& = R_0(s)(1 + VR_0(s))^{-1}.
\end{split}
\]

Before discussing the estimates of the full resolvent on the critical line, we must first establish the meromorphic continuation that makes its extension to the critical line well-defined.
\begin{lemma}\label{mero.cont}
For $V \in \rho^\alpha L^\infty(\bbH^{n+1})$ with $\alpha > 0$, the resolvent $R_V(s)$ admits a meromorphic continuation 
to the half-plane $\re s > \tfrac{n}2 - \delta$ as a bounded operator 
\[
R_V(s): \rho^{\delta}L^2(\bbH^{n+1}) \to \rho^{-\delta}L^2(\bbH^{n+1}),
\]
for $\delta < \alpha/2$.
\end{lemma}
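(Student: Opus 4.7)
The plan is to apply the analytic Fredholm theorem to the identity-plus-compact family $I + VR_0(s)$, viewed as a meromorphic family of Fredholm operators on a suitable weighted space, building on the meromorphic continuation of $R_0(s)$ stated in the Introduction. For $\re s$ sufficiently large, the standard self-adjoint resolvent bound together with $V \in L^\infty$ gives $\|VR_0(s)\|_{L^2 \to L^2} < 1$, and the Neumann series yields $R_V(s) = R_0(s)(I + VR_0(s))^{-1}$ as bounded operators on $L^2$.

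To extend this identity meromorphically, I would reinterpret it on the weighted space $\rho^\delta L^2(\bbH^{n+1})$ with $\delta < \alpha/2$. For $\re s > \tfrac{n}{2} - \delta$, the free resolvent $R_0(s): \rho^\delta L^2 \to \rho^{-\delta} L^2$ is bounded and meromorphic in $s$. Since $V \in \rho^\alpha L^\infty$ and $\alpha - \delta > \delta$, multiplication by $V$ carries $\rho^{-\delta} L^2$ into $\rho^{\alpha - \delta} L^2 \subset \rho^\delta L^2$. Thus $K(s) := VR_0(s)$ is a meromorphic family of bounded operators on $\rho^\delta L^2$ in this half-plane.

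The central technical step is to upgrade this to compactness of $K(s)$. Writing $V = \rho^\alpha \widetilde V$ with $\widetilde V \in L^\infty$, the image of $K(s)$ sits inside $\rho^{\alpha - \delta} L^2 \cap \rho^{-\delta} H^2_{\text{loc}}$ thanks to elliptic regularity for $-\Delta$. Compactness then follows from two ingredients: (i) the Rellich--Kondrachov embedding $H^2_{\text{loc}} \hookrightarrow L^2_{\text{loc}}$ on any hyperbolic ball, and (ii) the strict weight gain $\rho^{\alpha - 2\delta}$, which provides exponential decay as $r \to \infty$ and permits truncation of the tails in $\rho^\delta L^2$ with arbitrarily small error, uniformly over bounded sets of inputs. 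Once $K(s)$ is known to be a compact, meromorphic family on $\{\re s > \tfrac{n}{2} - \delta\}$ with $I + K(s)$ invertible at some point of large real part, the analytic Fredholm theorem produces a meromorphic inverse $(I + K(s))^{-1}$ on $\rho^\delta L^2$, and composing with $R_0(s)$ gives the required meromorphic continuation of $R_V(s)$ as operators $\rho^\delta L^2 \to \rho^{-\delta} L^2$.

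The main obstacle I expect is establishing the compactness of $K(s)$ cleanly on the non-compact manifold $\bbH^{n+1}$. Boundedness is immediate, but upgrading to compactness genuinely requires combining local elliptic gain from $R_0(s)$ with exponential tail control coming from the strict inequality $\delta < \alpha/2$; without a strict gap between $\alpha - \delta$ and $\delta$, one only obtains continuity, which is insufficient to invoke the Fredholm alternative.
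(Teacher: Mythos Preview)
Your proposal is correct and follows essentially the same route as the paper: both argue via the identity $R_V(s) = R_0(s)(I + VR_0(s))^{-1}$, establish that $VR_0(s)$ is a compact, meromorphic family on $\rho^\delta L^2$ for $\re s > \tfrac{n}{2} - \delta$ with $\delta < \alpha/2$, and then invoke the analytic Fredholm theorem. The only difference is that the paper outsources the compactness of $\rho^\alpha R_0(s)$ on $\rho^\delta L^2$ to a citation (Mazzeo, \emph{Unique continuation at infinity}, Prop.~3.29), whereas you sketch it directly via local elliptic gain plus the strict weight improvement $\rho^{\alpha - 2\delta}$; your sketch is exactly the mechanism behind that cited result in this setting.
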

\begin{proof}
It follows from \cite[Prop.~3.29]{Mazzeo91} that $\rho^\alpha R_0(s)$ is compact as an operator on $\rho^\delta L^2(\bbH^{n+1})$
provided that $\re s > \tfrac{n}2 - \delta$ and $\alpha > 2\delta$.   This implies that
$VR_0(s)$ is compact on $\rho^\delta L^2(\bbH^{n+1})$ under the same conditions. 
Therefore, the analytic Fredholm theorem gives a meromorphic continuation of $R_V(s)$ to the half-plane $\re s > \tfrac{n}2 - \delta$.
\end{proof}
For this class of potentials, the high-frequency behavior of the resolvent on the critical line is unaffected by the potential.

\begin{proposition}
\label{prop:resest}
For $V \in \rho^{\alpha} L^\infty(\bbH^{n+1})$ with $\alpha>0$, there exists a constant $M_{V}$ such that for $\lambda \in \bbR$ with $\abs{\lambda} \ge M_{V}$,
\begin{equation}
\label{eqn:resest}
\norm{\rho^{\alpha/2} \del_\lambda^q R_V(\tfrac{n}2 + i\lambda)\rho^{\alpha/2}}_{L^2 \to L^2} \le C_{q,\alpha} \abs{\lambda}^{-1}.
\end{equation}
In particular, there are no resonances on the critical line for $\abs{\lambda} \ge M_V$.
\end{proposition}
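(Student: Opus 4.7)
The plan is to reduce estimates on $R_V$ to the already-proved weighted estimates on $R_0$ (Proposition~\ref{R0.bounds}) through a symmetrized Birman--Schwinger identity. Since $V \in \rho^\alpha L^\infty$, I would factor $V = |V|^{1/2} U |V|^{1/2}$, where $U$ is the unitary sign of $V$ and $|V|^{1/2} = \rho^{\alpha/2} \tilde W$ for some $\tilde W \in L^\infty$. The symmetric resolvent identity,
\[
R_V(s) = R_0(s) - R_0(s) |V|^{1/2} \bigl(1 + K(s)\bigr)^{-1} U |V|^{1/2} R_0(s),
\]
with the Birman--Schwinger operator $K(s) := U|V|^{1/2} R_0(s) |V|^{1/2}$, reduces the problem to controlling $K$ and showing invertibility of $1+K$.

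The first key step is to observe that
\[
K(\tfrac{n}2 + i\lambda) = U\tilde W \cdot \rho^{\alpha/2} R_0(\tfrac{n}2 + i\lambda)\rho^{\alpha/2} \cdot \tilde W,
\]
and by Proposition~\ref{R0.bounds} with $\eta = \alpha/2$, together with the $L^\infty$ boundedness of $\tilde W$ and $U$, one gets $\norm{K(\tfrac{n}2 + i\lambda)}_{L^2\to L^2} \le C_\alpha \norm{V}_{\rho^\alpha L^\infty} |\lambda|^{-1}$. Choosing $M_V$ large enough that this bound is at most $\tfrac12$ for $|\lambda|\ge M_V$, the Neumann series gives $(1+K)^{-1}$ uniformly bounded on $L^2$. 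Conjugating the resolvent identity by $\rho^{\alpha/2}$ on both sides yields
\[
\rho^{\alpha/2} R_V \rho^{\alpha/2} = \rho^{\alpha/2} R_0 \rho^{\alpha/2} - \bigl(\rho^{\alpha/2} R_0 \rho^{\alpha/2}\bigr)\tilde W (1+K)^{-1} U\tilde W \bigl(\rho^{\alpha/2} R_0 \rho^{\alpha/2}\bigr),
\]
and each of the two bracketed $R_0$-factors contributes $O(|\lambda|^{-1})$, so the correction actually decays like $|\lambda|^{-2}$ while the leading term is $O(|\lambda|^{-1})$; the stated $q=0$ bound follows.

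For $q \ge 1$, I would differentiate the factored expression via Leibniz, noting that
\[
\del_\lambda^j (1+K)^{-1} = \sum (1+K)^{-1}(\del_\lambda^{j_1}K)(1+K)^{-1}\cdots(\del_\lambda^{j_m}K)(1+K)^{-1},
\]
where each $\del_\lambda^{j_i} K$ equals $U\tilde W \cdot \rho^{\alpha/2}\del_\lambda^{j_i} R_0 \rho^{\alpha/2}\cdot \tilde W$ and is $O(|\lambda|^{-1})$ by Proposition~\ref{R0.bounds}. Since every derivative landing on a $\rho^{\alpha/2} R_0 \rho^{\alpha/2}$ factor preserves the $|\lambda|^{-1}$ decay, every term in the expansion of $\del_\lambda^q(\rho^{\alpha/2} R_V \rho^{\alpha/2})$ is at worst $O(|\lambda|^{-1})$, proving \eqref{eqn:resest}. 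Finally, the absence of resonances on the critical line for $|\lambda|\ge M_V$ is immediate: a pole of the meromorphic continuation from Lemma~\ref{mero.cont} would force $\rho^{\alpha/2} R_V(\tfrac{n}2 + i\lambda)\rho^{\alpha/2}$ to be unbounded near that point, contradicting \eqref{eqn:resest}.

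The main obstacle I anticipate is purely bookkeeping: ensuring that the higher-derivative estimate does not degrade, which amounts to verifying that Leibniz expansions of $(1+K)^{-1}$ produce only bounded operator factors (from $(1+K)^{-1}$) alongside the $O(|\lambda|^{-1})$ factors coming from derivatives of $K$ and of the outer $R_0$'s. This is not a deep difficulty but requires the uniform invertibility of $1+K$ on a half-line $|\lambda|\ge M_V$, which is exactly what the $|\lambda|^{-1}$ bound on $K$ supplies.
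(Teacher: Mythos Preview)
Your proof is correct and follows essentially the same strategy as the paper: a Neumann--series argument driven by Proposition~\ref{R0.bounds}.  The only difference is in the choice of resolvent identity.  You use the symmetric Birman--Schwinger factorization $V=|V|^{1/2}U|V|^{1/2}$ and invert $1+K$ with $K=U|V|^{1/2}R_0|V|^{1/2}$, whereas the paper works with the one--sided identity
\[
R_V(s)\rho^{\alpha/2}=R_0(s)\rho^{\alpha/2}\bigl(1+\rho^{-\alpha/2}VR_0(s)\rho^{\alpha/2}\bigr)^{-1},
\]
and inverts $1+\rho^{-\alpha/2}VR_0\rho^{\alpha/2}$ directly.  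In both cases the operator being inverted has norm $O(|\lambda|^{-1})$ by Proposition~\ref{R0.bounds}, so the Neumann series converges for $|\lambda|\ge M_V$ and the derivative bounds follow by Leibniz exactly as you describe.  The paper's version is marginally shorter since it avoids the square--root factorization of $V$, but your symmetric setup is the more standard scattering--theoretic formulation and gives the slightly sharper observation that the correction term is actually $O(|\lambda|^{-2})$.
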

\begin{proof} 
Similar results were proven in \cite{Borthwick2014} with slightly stronger assumptions on the potential.  By the resolvent identity,
\[
R_0(s) = R_V(s) + R_V(s)VR_0(s),
\]
we can write 
\[
R_0(s)\rho^{\alpha/2} = R_V(s)\rho^{\alpha/2}(1 + \rho^{-\alpha/2}VR_0(s)\rho^{\alpha/2}).
\]
The factor on the right is meromorphically invertible by the analytic Fredholm theorem, so that
\begin{equation}\label{Rvp}
R_V(s)\rho^{\alpha/2} = R_0(s)\rho^{\alpha/2} (1 + \rho^{-\alpha/2}VR_0(s)\rho^{\alpha/2})^{-1}.
\end{equation}
By Proposition \ref{R0.bounds},
\[
\norm{\rho^{-\alpha/2}VR_0(\tfrac{n}2 + i\lambda)\rho^{\alpha/2}} \le C_\alpha \norm{\rho^{-\alpha} V}_\infty \abs{\lambda}^{-1}.
\]
Hence for $V \in \rho^{\alpha} L^\infty(\bbH^{n+1})$, there exists a constant $M_{V}$ such that for 
$\abs{\lambda} \ge M_{V}$,
\[
\norm{\rho^{-\alpha/2}VR_0(\tfrac{n}2 + i\lambda)\rho^{\alpha/2}} \le \frac12,
\]
implying that $(1 + \rho^{-\alpha/2}VR_0(\tfrac{n}2 + i\lambda)\rho^{\alpha/2})^{-1}$ exists and satisfies
\[
\norm{(1 + \rho^{-\alpha/2}VR_0(\tfrac{n}2 + i\lambda)\rho^{\alpha/2})^{-1}} \le 2.  
\]
The estimates then follow from \eqref{Rvp} and Proposition \ref{R0.bounds}.  
\end{proof}

For the matrix case, we also need corresponding estimates for $R_V(\tfrac{n}2 + \sigma)$ with $\sigma>0$, but these estimates just follow from the standard formula for the resolvent norm in terms of distance to the spectrum, with no need for weights.  
For $\sigma$ sufficiently large, we have
\[
\norm{R_V(\tfrac{n}2 + \sigma)}_{L^2 \to L^2}  = O(\sigma^{-2}).  
\]
By writing $\sigma$-derivatives in terms of powers of the resolvent, we can extend this to
\begin{equation}\label{RV.realaxis}
\norm{\del_\sigma^q R_V(\tfrac{n}2 + \sigma)}_{L^2 \to L^2}  = O(\sigma^{-2-q}),
\end{equation}
for $q = 0, 1, 2,\dots$ and $\sigma>0$.

\section{Absence of Embedded Resonances and Eigenvalues}\label{absence} 

In this section we take up the proof of Theorem~\ref{absence.thm}.
Proposition~\ref{prop:resest} already established the absence of embedded eigenvalues and resonances for $\lambda$ sufficiently large by showing that $R_V(\tfrac{n}2 + i\lambda)$ is regular for large $\abs{\lambda}$.

To extend this result to all $\lambda \ne 0$, our first task is to show that any resonances on the critical line must 
come from an embedded eigenvalue, except possibly at the bottom of the spectrum.
We will subsequently show that such embedded eigenvalues are ruled out.  Similar results were established in \cite{lawrie2014stability} for the Schr\"odinger operator associated with the wave maps problem on $\HS^{n+1}$.

\begin{lemma}\label{res.crit.lemma}
Suppose $V \in \rho^\alpha L^\infty(\HS^{n+1}, \bbR)$ for some $\alpha>0$.  If $R_V(s)$ has a pole at 
$s = \tfrac{n}{2} + i\lambda$ for $\lambda \in \bbR\backslash\{0\}$, 
then $\tfrac{n^2}{4} + \lambda^2$ is an embedded eigenvalue for $-\Delta + V$.
\end{lemma}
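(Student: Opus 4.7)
The plan is to extract a candidate eigenfunction from the leading Laurent coefficient of $R_V$ at $s_0 = \tfrac{n}{2} + i\lambda$ and then promote its decay from weighted-$L^2$ to genuine $L^2$ by means of a radiation-type Green's identity.

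First I would extract the resonance state. By Lemma~\ref{mero.cont}, the pole of $R_V(s)$ at $s_0$ has a finite-rank leading Laurent coefficient whose range contains a nonzero $u \in \rho^{-\delta}L^2(\HS^{n+1})$ (for any $\delta < \alpha/2$) satisfying
\[
(-\Delta + V)u \;=\; s_0(n-s_0)\, u \;=\; \bigl(\tfrac{n^2}{4} + \lambda^2\bigr)\, u.
\]
From this equation, $u = -R_0(s_0)(Vu)$, where the right-hand side makes sense on the meromorphically continued weighted spaces, since $Vu \in \rho^{\alpha - \delta}L^2$.

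Next I would derive the precise asymptotic behavior of $u$ at infinity. Using the factorization
\[
e^{-d(z,w)} \sim \rho(z)\, g(w,\omega_z) \qquad \text{as } r_z := d(z,0) \to \infty,
\]
(which follows from $\cosh d(z,w) = \cosh r_z \cosh r_w - \sinh r_z \sinh r_w \cos\theta$, with $\omega_z$ the limiting direction of $z$ on $S^n$), together with the Legendre asymptotics of Lemma~\ref{PQ.lemma}, the free resolvent kernel factorizes as
\[
R_0(s_0;z,w) \;=\; \rho(z)^{s_0}\, \Phi(w,\omega_z) + o(\rho(z)^{n/2}).
\]
Applying this expansion to $u = -R_0(s_0)(Vu)$, which is justified by the decay of $Vu$, yields the polyhomogeneous outgoing asymptotic
\[
u(r,\omega) \;=\; e^{-(n/2 + i\lambda)r}\,\phi(\omega) + o\!\bigl(e^{-nr/2}\bigr), \qquad r \to \infty,
\]
with $\phi \in L^2(S^n)$, and a matching expansion $\partial_r u = -(n/2 + i\lambda)\,e^{-(n/2+i\lambda)r}\phi + o(e^{-nr/2})$.

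Then I would apply Green's identity on the geodesic ball $B_R$ centered at the origin. Because $u$ satisfies an equation with real spectral parameter and real potential, the interior terms cancel and
\[
0 \;=\; \int_{B_R}\bigl[(-\Delta u)\overline{u} - u\,(-\Delta \overline{u})\bigr]\,dV \;=\; \int_{\partial B_R}\bigl[u\,\partial_r\overline{u} - \overline{u}\,\partial_r u\bigr]\,dS.
\]
Inserting the leading asymptotics and using $dS \sim 2^{-n}e^{nR}\,d\omega$ as $R \to \infty$, one obtains the radiation identity
\[
2i\lambda \cdot 2^{-n}\,\|\phi\|_{L^2(S^n)}^2 \;=\; 0,
\]
forcing $\phi \equiv 0$ since $\lambda \neq 0$. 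With $\phi = 0$ the next term in the outgoing expansion gives $u = O(e^{-(n/2+1)r})$, so $|u|^2\sinh^n r \le Ce^{-2r}$ and $u \in L^2(\HS^{n+1})$. Hence $\tfrac{n^2}{4} + \lambda^2$ is an embedded eigenvalue, as required.

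The principal obstacle I foresee is the rigorous justification of the polyhomogeneous expansion for $u$ in the second step: extracting the leading angular profile $\phi$ \emph{and} showing that once it vanishes the next term supplies genuine $e^{-(n/2+1)r}$ decay rather than an incoming contribution $e^{-(n/2-i\lambda)r}$ or only a logarithmic gain. This is the outgoing radiation condition implicit in $u = -R_0(s_0)(Vu)$, and cleanly establishing it requires either a Poisson-operator input from asymptotically hyperbolic scattering theory, or a direct bootstrap built on the kernel estimates of Section~\ref{sec:specasym} together with standard elliptic regularity for the derivatives $\partial_r u$.
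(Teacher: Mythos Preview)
Your strategy matches the paper's: extract a resonance state $u$ from the leading Laurent coefficient, establish an outgoing asymptotic $u \sim \rho^{n/2+i\lambda}\phi(\omega)$, use a boundary pairing to force $\phi=0$ when $\lambda\ne 0$, and conclude $u\in L^2$. The two implementations differ in detail. For the expansion, the paper invokes Mazzeo's result \cite[Thm.~7.14]{Mazzeo91}, which yields directly
\[
u = \rho^{\frac{n}{2}+i\lambda}a + \rho^{\alpha/2}v,\qquad v\in H^2(\HS^{n+1}),
\]
whereas you propose to read it off from $u=-R_0(s_0)(Vu)$ and the Legendre asymptotics. For the pairing, the paper uses the commutator $\langle[-\Delta,\psi_\delta]u,u\rangle=0$ with a smooth radial cutoff $\psi_\delta$ (the Melrose boundary-pairing device), while you use Green's identity on geodesic balls; these are equivalent computations and both produce the same $2i\lambda\|\phi\|_{L^2(S^n)}^2=0$.

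The point where the paper's route is genuinely cleaner is your final step. Once $a=0$, Mazzeo's decomposition gives $u=\rho^{\alpha/2}v\in L^2$ immediately, with no further bootstrap needed. Your claim that vanishing of $\phi$ yields $u=O(e^{-(n/2+1)r})$ presumes an integer-step polyhomogeneous expansion of $R_0(s_0)(Vu)$; this is true on exact $\HS^{n+1}$ for compactly supported data, but for $Vu\in\rho^{\alpha-\delta}L^2$ it is exactly the obstacle you flag. Citing Mazzeo (or equivalently the mapping properties of the Poisson operator) is how the paper avoids that issue: it supplies the $L^2$ remainder directly rather than forcing you to chase the next term of the expansion.
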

\begin{proof}
If $R_V(s)$ has a pole at $s = \tfrac{n}{2} + i\lambda$, then for $\phi \in \cinf_0(\HS^{n+1})$ we have
\[
R_V(s) \phi = (s - \tfrac{n}2 - i\lambda)^{-m} u +  (s - \tfrac{n}2 - i\lambda)^{-m+1}v(s),
\]
for some $m \ge 1$, with $v(s)$ analytic near $s = \tfrac{n}{2} + i\lambda$.  Applying $-\Delta + V - s(n-s)$ gives 
\[
(-\Delta + V - s(n-s))u = (s - \tfrac{n}2 - i\lambda)^{m} \phi - (s - \tfrac{n}2 - i\lambda) (-\Delta + V - s(n-s))v(s).
\]
Taking $s \to \tfrac{n}2 + i\lambda$ then shows that
\begin{equation}\label{u.eigen}
(-\Delta + V - \tfrac{n^2}4 - \lambda^2)u = 0.
\end{equation}
By the identity $R_V(s) = R_0(s) - R_0(s) VR_V(s)$, we see that $R_V(s)$ maps $\cinf_0(\HS^{n+1}) \to \rho^{-\epsilon} H^2(\HS^{n+1})$ for any $\epsilon>0$.  Hence $u \in \rho^{-\epsilon} H^2(\HS^{n+1})$.  

It remains to prove that $u$ actually lies in $L^2(\HS^{n+1})$.  If we set $\epsilon = \alpha/2$ and then the assumption on $V$ gives
\[
(-\Delta  - \tfrac{n^2}4 - \lambda^2)u \in \rho^{\alpha/2} L^2(\HS^{n+1}).
\]
We can apply \cite[Thm.~7.14]{Mazzeo91} to deduce that 
\begin{equation}\label{u.expand}
u = \rho^{\frac{n}2 + i\lambda} a + \rho^{\alpha/2} v,
\end{equation}
where $a$ is a function on the sphere $S^n$ and $v \in H^2(\bbH^{n+1})$.   
We could also have deduced this directly from $u = -R_0(\tfrac{n}2 + i\lambda) Vu$ and
the explicit formula for the kernel of $R_0(s)$.
From the fact that $u \in \rho^{-\alpha/2} L^2(\HS^{n+1})$ we can deduce that $a \in L^2(S^n)$.

Note that $u \in L^2(\HS^{n+1})$ if and only if $a = 0$.  To prove that $a=0$ we will use a boundary pairing argument adapted from \cite{Melrose94}.   
Let $\psi \in \cinf(\bbR)$ be a function with $\psi(r) = 0$ for $r \le 1$, $\psi(r) = 1$ for $r \ge 2$, and $\psi'(r) \ge 0$.
Then for $\delta>0$ let $\psi_\delta(r) := \psi(e^{-r}/\delta)$.  We compute the commutator,
\[
\begin{split}
[-\Delta, \psi_\delta] &= -\delta^{-2} e^{-2r} \psi''(e^{-r}/\delta) + \delta^{-1} (n\coth^2 r - 1) e^{-r}  \psi'(e^{-r}/\delta)\\
&\qquad - 2\delta^{-1} e^{-r}  \psi'(e^{-r}/\delta)\del_r.
\end{split}
\]
By the eigenvalue equation \eqref{u.eigen} and the fact that $u \in \rho^{-\epsilon} H^2(\HS^{n+1})$, we have
\begin{equation}\label{delta.psi}
\brak{[-\Delta, \psi_\delta]u,u} = 0.
\end{equation} 
(This is the point where we must assume that the potential $V$ is real.)  
If we substitute \eqref{u.expand} into this inner product then
since $\rho \le 2\delta$ on the support of $[-\Delta, \psi_\delta]$, the contribution from the $\rho^{\alpha/2} v$ terms will be 
$O(\delta^{\alpha/2})$ as $\delta \to 0$.

Thus,
\begin{equation}\label{rho.a.limit}
\lim_{\delta\to 0} \brak{[-\Delta, \psi_\delta] \rho^{\frac{n}2 + i\lambda} a,\rho^{\frac{n}2 + i\lambda} a} = 0.
\end{equation}
Evaluating the left-hand side gives
\[
\begin{split}
&\lim_{\delta \to 0} \int_{-\log 2\delta}^{-\log\delta} \Bigl(-\delta^2 e^{(n-2)r} \psi''(e^{-r}/\delta)
+ \delta^{-1} (n\coth^2 r - 1) e^{(n-1)r} \psi'(e^{-r}/\delta) \\
&\hskip1in - (n+2i\lambda) \delta^{-1} e^{(n-1)r} \psi'(e^{-r}/\delta) \Bigr) \sinh^{n} r\>dr \cdot  \norm{a}^2_{L^2(S^n)}\\
&\qquad= 2i\lambda \norm{a}^2_{L^2(S^n)}.
\end{split}
\]
We conclude from \eqref{rho.a.limit} that for $\lambda \ne 0$ we must have $a = 0$, implying that $u$ is an honest $L^2$-eigenfunction.
\end{proof}

\begin{remark}
For the non-selfadjoint matrix equation, in place of \eqref{delta.psi} we would have
\[
\brak{[-\Delta, \psi_\delta] \vec{u}, \vec{u}} = 2 i \psi_\delta \im (u^2),
\] 
so this technique could not be used rule out resonances in that case.
\end{remark}

\bigbreak
Having shown that resonances on the critical line must come from embedded eigenvalues, 
our next step is to rule out the embedded eigenvalues.  This can be done under a weaker decay assumption.

\begin{proposition}\label{embed.prop}
Suppose $V \in  L^\infty(\HS^{n+1})$, with $V = o(r^{-1})$ as $r \to \infty$.  
Then $-\Delta + V$ has no eigenvalues in the range $(\tfrac{n^2}{4}, \infty)$.
\end{proposition}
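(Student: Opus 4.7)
The plan is to argue by contradiction. Suppose $u \in L^2(\HS^{n+1})$ is a nonzero eigenfunction of $-\Delta + V$ with eigenvalue $E = \tfrac{n^2}{4} + \lambda^2$ for some $\lambda > 0$; by elliptic regularity $u$ is smooth. The goal is to show that $u$ vanishes on an open end, from which Aronszajn's unique continuation theorem for elliptic operators with bounded coefficients yields $u \equiv 0$ globally, contradicting the assumption.

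First I would reduce to a half-line Schr\"odinger problem. In geodesic polar coordinates $(r,\omega)$ about a fixed point, set $w := (\sinh r)^{n/2} u$. The conjugation removes the first-order radial term from the Laplacian, and the eigenvalue equation becomes
\[
-\del_r^2 w - \frac{1}{\sinh^2 r}\Delta_{S^n} w + \tilde V(r,\omega)\, w = \lambda^2 w,
\]
where $\tilde V := V + \frac{n(n-2)}{4\sinh^2 r}$. Both the centrifugal term $\frac{1}{\sinh^2 r}\Delta_{S^n}$ and the conjugation contribution decay exponentially in $r$, so on the end we obtain an effective radial potential satisfying $\tilde V(r,\omega) = o(r^{-1})$ as $r \to \infty$, the classical Kato-type decay.

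The main step is a Carleman estimate of the form
\[
\tau \int_R^\infty\!\int_{S^n} e^{2\tau\phi(r)} |w|^2 \, dr\, d\omega \le C \int_R^\infty\!\int_{S^n} e^{2\tau\phi(r)} |Pw|^2 \, dr\, d\omega,
\]
valid for smooth $w$ supported in $\{r > R\}$, where $P := -\del_r^2 - \frac{1}{\sinh^2 r}\Delta_{S^n} - \lambda^2$, $\phi$ is a suitable radial Carleman weight, and $\tau$ is large. Invoking the eigenvalue equation, $Pw = -\tilde V w$, so the right-hand side is controlled by $\sup_{r > R} |\tilde V(r,\omega)|^2$ times the left-hand side; the decay $\tilde V = o(r^{-1})$ lets us choose $R$ large enough to absorb the right-hand side. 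Applying this estimate to $\chi w$ for a cutoff $\chi$ equal to $1$ on $\{r > 2R\}$ and supported in $\{r > R\}$, the commutator errors $[P,\chi]w$ are bounded in terms of $\|u\|_{L^2}$ on a compact annulus; letting $\tau \to \infty$ forces $w \equiv 0$ on $\{r > 2R\}$.

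The principal obstacle is constructing a Carleman weight $\phi$ that works at the embedded energy $\lambda^2 > 0$. Since the expected boundary behavior of $u$ is oscillatory, of the form $\rho^{n/2 \pm i\lambda}$ to leading order, plain exponential weights such as $\phi(r) = r$ are incompatible with the principal part $-\del_r^2 - \lambda^2$. A standard remedy is to take $\phi$ of the form $\phi(r) = r - c\log r$ (or a suitable convexified variant) and to analyze the conjugated operator $e^{\tau\phi}(-\del_r^2 - \lambda^2)e^{-\tau\phi}$ by splitting it into its symmetric and skew-symmetric parts and bounding from below via a Mourre-type commutator. The hypothesis $V = o(r^{-1})$, rather than $O(r^{-1})$, is the sharp decay threshold allowing the potential to enter as a lower-order perturbation of this principal Carleman inequality.
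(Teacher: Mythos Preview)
Your reduction via $w = (\sinh r)^{n/2}u$ matches the paper, but from there the routes diverge. The paper follows Donnelly's monotonicity argument rather than a Carleman estimate: it introduces
\[
G(r) := \norm{\partial_r w}^2_{L^2(S^n)} - (\sinh r)^{-2}\norm{\nabla_\theta w}^2_{L^2(S^n)} + \lambda^2\norm{w}^2_{L^2(S^n)},
\]
shows $(rG(r))' \ge 0$ for large $r$ using $\tilde V = o(r^{-1})$, deduces $G \le 0$ from $G \in L^1$, and then derives a contradiction via a second monotone quantity $L_m(r)$ built from $r^m w$. No exponential weights enter, and only the a priori $L^2$ control on $w$ is used.

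Your Carleman route has a genuine gap at the application step. You apply the weighted estimate to $\chi w$ with $\chi \equiv 1$ on $\{r > 2R\}$, but this function is not compactly supported, and with $\phi(r) \sim r$ the quantity $e^{\tau\phi}\chi w$ is not in $L^2$: all you know is $w \in L^2(dr\,d\omega)$. A Carleman inequality proved by the commutator method requires the boundary contributions at infinity to vanish, which here presupposes exponential decay of $w$ that has not been established. In the Euclidean analogue this is exactly why the Froese--Herbst argument first bootstraps $e^{ar}u \in L^2$ for all $a>0$ before concluding that $u$ vanishes; your sketch omits that stage entirely, and ``letting $\tau \to \infty$'' is not justified. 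A secondary issue: the commutator gain from $\phi = r - c\log r$ scales like $\tau^3 r^{-2}$ rather than $\tau$, so the inequality one actually obtains carries an $r^{-2}$ weight on the left, not the unweighted $\tau\int e^{2\tau\phi}|w|^2$ you wrote. The absorption of $\tilde V = o(r^{-1})$ then still works, but only with the corrected form of the estimate.
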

\begin{proof}
The argument from \cite{Donnelly81} essentially carries over directly to the Schr\"odinger case, even for non-smooth potentials.
Suppose $u \in H^2(\HS^{n+1})$ satisfies the eigenvalue equation,
\[
(-\Delta + V - \tfrac{n^2}{4} - \lambda^2)u = 0.
\]
If we write $w = (\sinh r)^{n/2} u$ then the equation becomes
\[
Hw = \lambda^2 w,
\]
where
\[
H := -\del_r^2 - (\sinh r)^{-2} \Delta_{S^n} + \tilde{V},
\]
where
\[
\tilde{V} := V + \frac{n(n-2)}{4}(\coth^2 r - 1).
\]

Since $u \in H^2(\HS^{n+1})$, we have $w \in H^2(\bbR_+ \times S^n)$ and thus 
the trace $w|_r$ is well-defined in $H^{3/2}(S^n)$ for each $r$, allowing us to define
\[
G(r) := \norm{\del_r w}^2_{L^2(S^n)} - (\sinh r)^{-2} \norm{\nabla_\theta w}^2_{L^2(S^n)} +  \lambda^2 \norm{w}^2_{L^2(S^n)}.
\]
The fact that $w \in H^2(\bbR_+ \times S^n)$ further implies that $G \in L^1(\bbR_+)$.
Using the eigenvalue equation, we can calculate that
\[
\begin{split}
(rG(r))' &= \norm{\del_r w}^2_{L^2(S^n)} + \lambda^2 \norm{w}^2_{L^2(S^n)} - \left(\frac{r}{\sinh^2 r}\right)' \norm{\nabla_\theta w}^2_{L^2(S^n)} \\
&\qquad+ 2r (\del_r w, \tilde{V}w)_{L^2(S^n)}.
\end{split}
\]
The first three terms are positive, and the fourth is bounded by the first two for $r$ sufficiently large, by the Cauchy-Schwarz inequality and the assumption that $V = o(r^{-1})$.  We conclude that $(rG(r))' \ge 0$ for all 
$r \ge R_0$ with $R_0$ sufficiently large.
The integrability of $G$ would evidently fail if we had $G(r) >0$ for any $r \ge R_0$, 
so we can conclude that $G(r) \le 0$ for $r \ge R_0$.

The remainder of the proof follows \cite{Donnelly81} very closely.  We set $w_m := r^{m} w$ and
\[
\begin{split}
L_m(r) &:=  \norm{\del_r w_m}^2_{L^2(S^n)} - (\sinh r)^{-2} \norm{\nabla_\theta w_m}^2_{L^2(S^n)} \\
&+ \Bigl[\lambda^2(1 - R_0r^{-1}) + m(m+1) r^{-2} \Bigr] \norm{w}^2_{L^2(S^n)}.
\end{split}
\]
A computation similar to that for $G(r)$,  using the assumption that $V = o(r^{-1})$, shows that $(r^2 L_m(r))' >0$ for $m > m_0$ and $r> R_1>R_0$.  It follows that $L_{m_1}(r)>0$ for some $m_1>m_0$ and $r>R_2>R_1$.  We can then chose 
$R_3>R_2$ such that $\del_r\norm{w}_{L^2(S^n)}|_{r=R_3} < 0$ and so that  
\[
-\lambda^2 R_0r^{-1} + m_1(2m_1+1)r^{-2} < 0,
\]
for $r \ge R_3$.  A direct estimate then shows that
\[
R_3^{-2m_1} L_{m_1}(R_3) \le G(R_3).
\]
Since $L_{m_1}(R_3)>0$ and $G(R_3)<0$, this contradiction rules out the existence of an eigenvector.
\end{proof}

The combination of Lemmas~\ref{mero.cont} and \ref{res.crit.lemma} with Proposition~\ref{embed.prop} furnishes the 
proof of Theorem~\ref{absence.thm}.

\section{Full spectral resolution estimates}
\label{sec:specres}

Although it is relatively straightforward to produce operator norm bounds on the full resolvent $R_V(s)$, 
the dispersive estimates require finer control of the kernel of the spectral resolution,
\[
d\Pi_V(\lambda) = -4\lambda \im [R_V(\tfrac{n}2 - i\lambda)] \>d\lambda.
\]  
Translating the operator bounds on $R_V(s)$ into pointwise bounds on the imaginary part of the
kernel is the main goal of this section.  

\begin{proposition}\label{ImRV.prop}
Assume that $V \in \rho^\alpha L^\infty$, where 
\[
\frac{\alpha}n > 1 - \left\lfloor \frac{n+5}4 \right\rfloor^{-1},
\]
and assume that $-\Delta + V$ does not have a resonance at $s = \tfrac{n}2$.  
Then there exists an $M>0$, such that for any $q \in \mathbb{N}$,
\[
\sup_{z,w\in \HS^{n+1}} \abs{\del_\lambda^q \im R_V(\tfrac{n}2 + i\lambda; z, w)} \le C_{q,V} \brak{\lambda}^M,
\]
for all $\lambda \in \bbR$.
\end{proposition}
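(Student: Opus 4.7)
The plan is to combine the pointwise kernel estimates of Section \ref{sec:specasym} with the weighted operator norm bound of Proposition \ref{prop:resest} via a sufficiently iterated Birman--Schwinger expansion. Setting $N := \lfloor (n+5)/4 \rfloor$, one can verify by iterating both $R_V = R_0 - R_0 V R_V$ and $R_V = R_0 - R_V V R_0$ that
\[
R_V(s) = \sum_{k=0}^{2N-1} (-1)^k (R_0(s) V)^k R_0(s) + (R_0(s) V)^N R_V(s) (V R_0(s))^N.
\]
The integer $N$ is chosen to match the threshold $\alpha/n > 1 - 1/N$ appearing in the hypothesis.

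For each term in the finite sum, the kernel of $(R_0 V)^k R_0$ is an iterated integral over $k$ copies of $V$ and $k+1$ copies of $R_0$. The plan is to bound these pointwise by applying Young-type inequalities on $\HS^{n+1}$, using $|V(z)| \leq C \rho(z)^\alpha$ together with the kernel estimates of Corollary \ref{R0.cor} and Lemma \ref{R0.bottom}. Each iterated integration splits into a ``near-diagonal'' region (governed by the $r^{1-n}$ singularity of $R_0$) and a ``far'' region (governed by exponential decay weighted by $\rho^\alpha$). The factor $|\sigma|^{n/2-1}$ in the free-resolvent asymptotics produces polynomial growth in $|\lambda|$.

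For the remainder $(R_0 V)^N R_V (V R_0)^N$, I would apply Cauchy--Schwarz with weights $\rho^{\pm\alpha/2}$:
\[
\bigl|[(R_0 V)^N R_V (V R_0)^N](z, w)\bigr|
\leq \bigl\| \rho^{-\alpha/2} (R_0 V)^N(z, \cdot) \bigr\|_{L^2}
\bigl\| \rho^{\alpha/2} R_V \rho^{\alpha/2} \bigr\|_{L^2 \to L^2}
\bigl\| \rho^{-\alpha/2} (V R_0)^N(\cdot, w) \bigr\|_{L^2}.
\]
The two outer $L^2$ factors are finite uniformly in $z,w$ by exactly the same iterated Young estimate; this is precisely where $N$ iterations are needed. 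The middle operator norm is bounded polynomially in $|\lambda|$ by Proposition \ref{prop:resest} for $|\lambda| \geq M_V$, and for bounded $\lambda$ the meromorphic continuation of Lemma \ref{mero.cont}, together with Theorem \ref{absence.thm} and the hypothesis at $s = \tfrac{n}{2}$, gives a uniform bound. To pass to $\im$ and $\del_\lambda^q$, I would distribute these operations via the product rule over the expansion and apply the corresponding derivative estimates for $R_0$ (Corollary \ref{R0.cor}) and $R_V$ (Proposition \ref{prop:resest}).

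The main obstacle will be the Young-type inequality in the second step, specifically verifying that $N = \lfloor (n+5)/4 \rfloor$ iterations suffice exactly when $\alpha/n > 1 - 1/N$. This requires careful bookkeeping of how the diagonal singularity of the $R_0$-kernel is smoothed by each composition, balanced against the decay rate $\rho^\alpha$ of $V$ needed to compensate for the exponential volume growth of $\HS^{n+1}$ at infinity. A secondary subtlety is extending the estimate uniformly down to $\lambda = 0$, where the conjugation symmetry $R_0(\tfrac{n}{2}+i\lambda) = \overline{R_0(\tfrac{n}{2}-i\lambda)}$ forces $\im R_V$ to vanish at $\lambda = 0$ under the no-resonance hypothesis, so that the possible blow-up of the kernel estimates in that regime does not spoil the final pointwise bound.
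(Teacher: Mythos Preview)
Your outline matches the paper's proof: the same Birman--Schwinger expansion with $m=N=\lfloor(n+5)/4\rfloor$, the same Cauchy--Schwarz treatment of the remainder against $\rho^{\alpha/2}R_V\rho^{\alpha/2}$ (with Proposition~\ref{prop:resest} extended through $\lambda=0$ by the no-resonance hypothesis), and the same iterated Young-type estimate (the paper packages the $L^q$ kernel bounds as Lemma~\ref{R0.Lp.lemma} and the abstract Young inequality as Lemma~\ref{young.lemma}).

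One point in your sketch needs to be tightened. You describe bounding the finite-sum kernels $(R_0V)^kR_0(z,w)$ pointwise and only afterwards ``passing to $\im$'' by the product rule; but for the finite sum the imaginary part is not a post-processing step, it is the mechanism that makes the pointwise bound possible at all. The $\ell=0$ term is simply $R_0(\tfrac{n}{2}+i\lambda;z,w)$, which is singular on the diagonal and hence not pointwise bounded. What saves the argument is that after expanding $\im\bigl[(R_0V)^\ell R_0\bigr]$ one factor in every resulting product is $\im R_0$, whose kernel is bounded (estimates \eqref{ImR0.kernel}, \eqref{ImR0.kernelb}) and lies in weighted $L^{q'}$ for every $q'\in[1,\infty]$; this extra freedom in Young's inequality is exactly what lets the iteration close at $p=\infty$ for each $\ell\le 2N-1$ under a weaker decay assumption on $V$ than the remainder term requires. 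Your remark about $\im R_V$ vanishing at $\lambda=0$ is not needed here; the free-kernel $L^q$ bounds are already uniform for $|\lambda|\le 1$ by Lemma~\ref{R0.bottom}, and the full-resolvent operator bound is simply continuous across $\lambda=0$ under the hypothesis.
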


The restriction on $\alpha$ is trivially satisfied by any $\alpha >0$ for $n=1,2$.  
For $3 \le n \le 6$ the condition is $\alpha>n/2$, which means that $V$ must have decay just slightly better than $L^2$.  
For $n>6$ the required decay is intermediate between $L^2$ and $L^1$.

The strategy for the proof of Proposition \ref{ImRV.prop} is to combine weighted $L^p$ estimates on the kernels 
using an analog of Young's inequality.  Let us first establish the kernel estimates.

\begin{lemma}\label{R0.Lp.lemma}
For $\lambda\in\bbR$, we have
\begin{equation}\label{R0.qnorm}
\norm{\del_\lambda^m R_0(\tfrac{n}2 + i\lambda; z,\cdot) \rho^\alpha}_{L^q} \le C_{n,m,q,\alpha} \brak{\lambda}^{n-1},
\qquad \text{for }1 \le q < \frac{n+1}{n-1},
\end{equation}
and
\begin{equation}\label{Im.qnorm}
\norm{\del_\lambda^m \Im R_0(\tfrac{n}2 + i\lambda; z,\cdot) \rho^\alpha}_{L^q} \le C_{n,m,q,\alpha} \brak{\lambda}^{n-1},
\qquad \text{for }1 \le q \le \infty,
\end{equation}
provided that $\alpha > \max\{0, n(\tfrac1{q} - \tfrac12)\}$.  
The estimates are uniform for $z \in\HS^{n+1}$.
\end{lemma}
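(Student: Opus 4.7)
The plan is to work in geodesic polar coordinates centered at $z$, in which $\abs{R_0(\tfrac{n}2+i\lambda;z,w)}$ depends only on $r := d(z,w)$, and to split the $r$-integration at $r \sim 1/\abs{\lambda}$ so as to apply the two complementary pointwise bounds of Corollary~\ref{R0.cor} (with Lemma~\ref{R0.bottom} taking over in the regime $\abs{\lambda} \le 1$). Writing
\[
\norm{R_0(\tfrac{n}2+i\lambda;z,\cdot)\rho^\alpha}_{L^q}^q = \int_0^\infty \abs{R_0(r)}^q \, I_z(r)\,dr, \qquad I_z(r) := \int_{S_r(z)}\rho^{q\alpha}\,d\sigma,
\]
my first task is to estimate the angular integral $I_z(r)$ uniformly in $z$. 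I would parametrize $S_r(z)$ by the angle $\psi$ from the geodesic joining $z$ to the origin, set $R := d(z,0)$, and invoke the hyperbolic law of cosines. This shows that the bulk of $S_r(z)$ sits at distance $\approx r+R$ from the origin, contributing $\sim \sinh^n r \cdot e^{-q\alpha(r+R)}$, while the "tip" where $d(w,0) \approx \abs{r-R}$ has angular size of order $1/\sinh R$ (or $1/\sinh r$ when $r < R$) and contributes only an $O(1)$ amount to $I_z(r)$, independently of $R$.

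With $I_z(r)$ under control, in the near region $r \lesssim 1/\abs{\lambda}$ the weight is essentially constant on the sphere, and the singular bound $\abs{R_0} \lesssim r^{1-n}$ (or $\abs{\log r}$ for $n=1$) together with $\sinh^n r \sim r^n$ yields integrand $\sim r^{n - q(n-1)}$, integrable near $0$ exactly when $q < (n+1)/(n-1)$, producing a contribution of order $\abs{\lambda}^{n-1-(n+1)/q}$ to $\|R_0\rho^\alpha\|_{L^q}$. For $\Im R_0$, the sharper bound $\abs{\lambda}^{n-1}$ from \eqref{ImR0.kernel} replaces the singular estimate, the near-region integral is then finite for all $q \in [1,\infty]$, and the restriction on $q$ disappears. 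In the far region $r \gtrsim 1/\abs{\lambda}$, the exponential bound $\abs{R_0} \lesssim \abs{\lambda}^{n/2-1} e^{-nr/2}$ combined with the bulk estimate of $I_z(r)$ yields an integrand $\lesssim \abs{\lambda}^{q(n/2-1)} e^{-qr(\alpha + n/2 - n/q)}$, integrable exactly under the hypothesis $\alpha > n(1/q - 1/2)$ and contributing at most $\abs{\lambda}^{n/2-1}$. Both contributions are dominated by $\brak{\lambda}^{n-1}$. For the $\lambda$-derivative version, the estimates \eqref{dR0.kernel}--\eqref{dR0.kernelb} introduce only a harmless $\epsilon$-loss in the exponential rate, which is absorbed by the strict inequality in the hypothesis on $\alpha$.

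I expect the uniformity in $z$ to be the main obstacle, since the boundary-defining function $\rho$ breaks hyperbolic isometry invariance and one cannot simply translate $z$ to the origin. The key point used above is that in the worst case $r \approx R$, the portion of $S_r(z)$ where $\rho$ is not small has hyperbolic area of order $1$ independent of $R$, and there the pointwise bound $\abs{R_0(r)}^q \lesssim e^{-qnR/2}$ already provides an exponentially small contribution after integrating $r$ over a window of size $O(1)$ around $R$. Outside this regime the tip contributes $e^{-q\alpha\abs{r-R}}$ and the bulk $e^{-q\alpha(r+R)}$; tracking the cancellation between the weight $e^{q\alpha R}$ picked up from translating the tip factor and the volume growth, one finds that both contributions are controlled uniformly in $R$ after the $r$-integration, giving the claimed $z$-independent bound.
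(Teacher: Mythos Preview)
Your treatment of the near region $r\abs{\lambda}\le 1$ and of the $\Im R_0$ case matches the paper exactly: drop the weight, use the diagonal bounds from Corollary~\ref{R0.cor} (or Lemma~\ref{R0.bottom} for $\abs{\lambda}\le 1$), and read off the condition $q<\tfrac{n+1}{n-1}$ from integrability of $r^{(1-n)q}\sinh^n r$ near $r=0$.

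Where you diverge is in the far region and the uniformity in $z$. The paper does not attempt any geometric analysis of the spheres $S_r(z)$; instead it simply applies H\"older's inequality to the two exponential factors,
\[
\int_{\bbH^{n+1}} e^{-q(\frac{n}2-\epsilon)d(z,w)}\,e^{-q\alpha\, d(w,0)}\,dV(w)
\;\le\; \bigl\|e^{-q(\frac{n}2-\epsilon)d(z,\cdot)}\bigr\|_{L^p}\,\bigl\|e^{-q\alpha\, d(\cdot,0)}\bigr\|_{L^{p'}},
\]
with $p,p'$ conjugate. The first norm is independent of $z$ by isometry invariance, and both norms are finite precisely when one can choose $p$ with $qp>2$ and $q\alpha p'>n$, which is possible exactly under the hypothesis $\alpha>n(\tfrac1q-\tfrac12)$. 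This one line replaces your entire last paragraph.

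Your direct approach via $I_z(r)$ can be pushed through, but the claim that the ``tip'' contributes $O(1)$ to $I_z(r)$ independently of $R$ is not correct as stated: for $r>R$ the cap of angular radius $\sim 1/\sinh R$ on $S_r(z)$ has hyperbolic area $\sim e^{n(r-R)}$, so the tip contribution to $I_z(r)$ is of order $e^{(n-q\alpha)(r-R)}$, which grows in $r$ when $\alpha<n/q$ (a case allowed by the hypothesis). The argument is only rescued after you reintroduce the factor $e^{-qnr/2}$ from $\abs{R_0}^q$ and integrate in $r$, at which point the exponents recombine to give the same condition $\alpha>n(\tfrac1q-\tfrac12)$ and an overall $e^{-qnR/2}$ decay in $R$. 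So your route is salvageable but requires the bookkeeping you allude to at the end; the paper's H\"older splitting avoids all of it.
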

\begin{proof}
For $\abs{\lambda} \ge 1$, the idea is to split the $q$-norm, 
\[
\norm{\del_\lambda^m R_0(\tfrac{n}2 + i\lambda; z,\cdot) \rho^\alpha}^q_{L^q} 
= \int \abs{\del_\lambda^mR_0(\tfrac{n}2 + i\lambda; z,w)}^q \rho(w)^{\alpha q} \> d \Vol_w.
\]
according to the value of $r\lambda$, where $r:= d(z,w)$, and use the estimates of Corollary~\ref{R0.cor}.  

For $\lambda d(z,w) \le 1$ we can drop the $\rho$ factor (since $\| \rho \|_{L^\infty} \leq C$) and use \eqref{R0.kernel} and \eqref{dR0.kernel} to write
\[
\begin{split}
\int_{\lambda d(z,w)\le 1} \abs{\del_\lambda^m R_0(\tfrac{n}2 + i\lambda; z,\cdot)}^q \rho^{\alpha q} \> d \Vol_w
& \le C_{n,m} \int_{\lambda r \le 1} r^{(1-n)q} \>\sinh^n r\>dr \\
& \le C_{n,m,q} \lambda^{(n-1)q - n-1},
\end{split}
\]
assuming that $(n-1)q < n+1$.  For $\lambda d(z,w) \ge 1$ we have
\[
\begin{split}
&\int_{\lambda d(z,w)\ge 1} \abs{\del_\lambda^m R_0(\tfrac{n}2 + i\lambda; z,\cdot)}^q \rho^{\alpha q} \> d \Vol_w \\
&\qquad \le C_{n,m,\epsilon} \lambda^{(n/2-1)q} \int e^{-q(\frac{n}2-\epsilon) d(z,w)} e^{-\alpha q d(w,0)} \> d \Vol_w 
\end{split}
\]
To eliminate the $z$ dependence, we split the terms with H\"older's inequality,
\[
\int e^{-q(\frac{n}2-\epsilon) d(z,w)} e^{-\alpha q d(w,0)} \> d \Vol_w \le \norm{e^{-q(\frac{n}2-\epsilon)r}}_p \norm{e^{-\alpha qr}}_{p'},
\]
where $p,p'$ are conjugate.  Since the measure includes a weight $\sinh^n r$, by choosing $\epsilon$ sufficiently small we can make these norms finite provided that $qp>2$ and $\alpha q p' >n$.  
Such a choice of $p,p'$ is possible provided that $\alpha > n(1/q - 1/2)$ and $\alpha \ge 0$.  
Hence, under these conditions,
\[
\int_{\lambda d(z,w)\ge 1} \abs{\del_\lambda^m R_0(\tfrac{n}2 + i\lambda; z,\cdot)}^q \rho^{\alpha q} \> d \Vol_w
\le C_{n,m,q,\alpha} \lambda^{(n/2-1)q}.
\]
This completes the proof of \eqref{R0.qnorm}.

For \eqref{Im.qnorm} the argument is essentially identical, except that we use \eqref{ImR0.kernel} to improve the estimate for $\lambda d(z,w) \le 1$.

For $\abs{\lambda}\le 1$, we use Lemma~\ref{R0.bottom} to estimate the kernels, and the integrals are split into $r \le 1$ and $r \ge 1$.  Otherwise the estimates proceed just as above.
\end{proof}

\bigbreak
To apply the $L^q$ estimates, we need a version of Young's inequality.   
Since we are not actually dealing with convolutions, we need to be a bit careful about the estimates required for the kernels.
\begin{lemma}\label{young.lemma}
On a measure space $(X,\mu)$, suppose the integral kernels $K_j(z,w)$ satisfy uniform estimates
\[
\norm{K_1(\cdot, w)}_{L^{q_1}} \le A_{q_1}, \qquad
\norm{K_1(z,\cdot)}_{L^{q_1}} \le A_{q_1}, \qquad
\norm{K_2(\cdot, z')}_{L^{q_2}} \le B_{q_2},
\]
for $q_1,q_2,p \in [1,\infty]$ such that
\[
\frac{1}{q_1} + \frac{1}{q_2} = \frac{1}{p} + 1.
\]
Then we have
\[
\norm{\int K_1(\cdot, w) K_2(w, z') \> d\mu(w)}_{L^{p}} 
\le A_{q_1} B_{q_2},
\]
uniformly in $z'$.  (The bound on $\norm{K_1(\cdot, w)}_{L^{q_1}}$ is not required if $p=\infty$.)
\end{lemma}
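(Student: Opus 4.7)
The plan is to adapt the classical three-exponent H\"older argument used to prove Young's convolution inequality to this non-convolutional setting. \emph{First}, I would dispose of the endpoint $p=\infty$: the hypothesis then forces $\tfrac{1}{q_1}+\tfrac{1}{q_2}=1$, so $q_1,q_2$ are H\"older conjugates and a direct application of H\"older's inequality in the $w$-variable gives
\[
\left|\int K_1(z,w)K_2(w,z')\,d\mu(w)\right|\le \|K_1(z,\cdot)\|_{L^{q_1}}\,\|K_2(\cdot,z')\|_{L^{q_2}}\le A_{q_1}B_{q_2},
\]
uniformly in $z,z'$. This also explains the parenthetical remark: the bound on $\|K_1(\cdot,w)\|_{L^{q_1}}$ never enters in this case.

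For $p<\infty$, \emph{next} I would introduce auxiliary exponents $r_1,r_2\in[1,\infty]$ by
\[
\frac{1}{r_j}:=\frac{1}{q_j}-\frac{1}{p},\qquad j=1,2,
\]
which are nonnegative since the hypothesis implies $q_j\le p$, and which satisfy $\tfrac{1}{p}+\tfrac{1}{r_1}+\tfrac{1}{r_2}=1$. The core step is the pointwise factorization
\[
|K_1(z,w)K_2(w,z')|=\bigl(|K_1|^{q_1}|K_2|^{q_2}\bigr)^{1/p}\cdot|K_1|^{1-q_1/p}\cdot|K_2|^{1-q_2/p},
\]
to which I would apply H\"older with exponents $(p,r_1,r_2)$ while integrating in $w$. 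This yields the pointwise-in-$z$ bound
\[
|f(z)|^p\le\left(\int|K_1(z,w)|^{q_1}|K_2(w,z')|^{q_2}\,d\mu(w)\right)\|K_1(z,\cdot)\|_{L^{q_1}}^{p-q_1}\|K_2(\cdot,z')\|_{L^{q_2}}^{p-q_2},
\]
where I set $f(z):=\int K_1(z,w)K_2(w,z')\,d\mu(w)$.

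\emph{Finally}, I would use the uniform bound $\|K_1(z,\cdot)\|_{L^{q_1}}\le A_{q_1}$ to eliminate one outer factor (and similarly $B_{q_2}$ for $K_2$), integrate in $z$, and apply Fubini. The remaining double integral is controlled by performing the $z$-integration first and invoking the \emph{other} hypothesis $\|K_1(\cdot,w)\|_{L^{q_1}}\le A_{q_1}$, followed by the $w$-integration of $|K_2(w,z')|^{q_2}$, which contributes $B_{q_2}^{q_2}$. Collecting exponents via $(p-q_j)+q_j=p$ gives $\|f\|_{L^p}^p\le A_{q_1}^pB_{q_2}^p$, as required.

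I do not foresee a genuine obstacle; the argument is pure bookkeeping. The only points requiring a moment of care are verifying that $r_1,r_2\in[1,\infty]$ under the stated hypothesis (which reduces to checking $q_j\le p$, a direct consequence of $\tfrac{1}{q_j}\ge\tfrac{1}{p}$), and tracking precisely which of the two $K_1$ bounds is invoked at each step, so that the parenthetical exclusion for $p=\infty$ is transparent.
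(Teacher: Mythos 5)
Your proof is correct. For $p=\infty$ you and the paper do the same thing: H\"older in $w$. For $p<\infty$ you take a genuinely different route. You factorize \emph{both} kernels as
$|K_1K_2| = \bigl(|K_1|^{q_1}|K_2|^{q_2}\bigr)^{1/p}\cdot|K_1|^{1-q_1/p}\cdot|K_2|^{1-q_2/p}$
and apply a three-exponent H\"older with exponents $(p,r_1,r_2)$, $1/r_j = 1/q_j - 1/p$, then close by Fubini. This is the standard textbook proof of Young's inequality transported to the non-convolution setting. The paper instead splits only $K_1 = |K_1|^s|K_1|^{1-s}$ with $s=q_2(1-1/q_1)$, applies a \emph{two}-exponent H\"older $(q_1',q_1)$, raises to the $q_1$-th power, takes the $L^{p/q_1}$ norm in $z$, and interchanges the order of integration via Minkowski's integral inequality rather than Fubini. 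Both routes establish the lemma, but yours is cleaner in the bookkeeping: the paper's version has the constants $A_{q_2}$, $A_p$, $B_{q_1}$ appearing at intermediate steps even though the hypotheses only define $A_{q_1}$ and $B_{q_2}$, whereas your argument tracks the two halves of the $K_1$ hypothesis transparently — $\|K_1(z,\cdot)\|_{q_1}$ in the pointwise H\"older step, $\|K_1(\cdot,w)\|_{q_1}$ in the Fubini step — and makes it immediate that only the former is used at $p=\infty$. The one step you should state (and do gesture at) is that $q_j\le p$, hence $r_j\in[1,\infty]$, follows from the exponent constraint together with $1/q_k\le 1$ for the other index.
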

\begin{proof}
For $p=\infty$, the result follows immediately by H\"older, so we can assume $p<\infty$, which implies $q_1,q_2 < \infty$ also.

Set 
\[
h(z,z') := \int K_1(z, w) K_2(w,z') \> d\mu(w).
\]
If we set $s = q_2(1-1/q_1) \in [0,1]$,
we can split $$\abs{K_1(z, w)} = \abs{K_1(z, w)}^s \abs{K_1(z, w)}^{1-s}$$ and then apply H\"older to obtain
\begin{align*}
&  \abs{h(z,z')} \le \\
& \left[ \int \abs{K_1(z, w)}^{sq_1'} \>d\mu(w) \right]^{\frac{1}{q_1'}}  
 \left[ \int \abs{K_1(z, w)}^{(1-s)q_1} \abs{K_2(w, z')}^{q_1} d\mu(w) \right]^{\frac1{q_1}},
\end{align*}
where $1/q_1' = 1 - 1/q_1 = s/q_2$.  This then implies that
\[
\abs{h(z,z')}^{q_1} \le A_{q_2}^{sq_1} \int \abs{K_1(z, w)}^{(1-s)q_1} \abs{K_2(w, z')}^{q_1} d\mu(w).  
\]  
Now we take the $p/q_1$-norm with respect to $z$ on both sides, yielding
\[
\norm{h(\cdot,z')}_{L^p}^{q_1} \le A_{q_2}^{sq_1} 
\left[\int \left(\int \abs{K_1(z, w)}^{(1-s)q_1} \abs{K_2(w, z')}^{q_1} d\mu(w) \right)^{\frac{p}{q_1}} d\mu(z) \right]^{\frac{q_1}{p}}.  
\]
We can use the Minkowski integral inequality to switch the order of integration and then apply the assumed $L^p$ bounds
(noting that $(1-s)p = q_2$):
\[
\begin{split}
& \left[\int \left(\int \abs{K_1(z, w)}^{(1-s)q_1} \abs{K_2(w, z')}^{q_1} d\mu(w) \right)^{\frac{p}{q_1}} d\mu(z) \right]^{\frac{q_1}{p}} \\
& \qquad \le  \int \left(\int \abs{K_1(z, w)}^{(1-s)p} \abs{K_2(w, z')}^{p} d\mu(z) \right)^{\frac{q_1}{p}} d\mu(w) \\
& \qquad \le  \int A_p^{(1-s)q_1} \abs{K_2(w, z')}^{p} d\mu(w) \\
&  \qquad \le A_{q_2}^{(1-s)q_1} B_{q_1}^{q_1}.
\end{split}
\]
Combining these estimates gives
\[
\norm{h(\cdot,z')}_{L^p}^{q_1} \le A_{q_2}^{q_1} B_{q_1}^{q_1},
\]
which completes the proof.
\end{proof}

\bigskip
\begin{proof}[Proof of Proposition~\ref{ImRV.prop}]
As in the recent results \cite{GolGreen1,GolGreen2}, the proof relies on the Birman-Schwinger type resolvent expansion at all frequencies:
\begin{equation}\label{eqn:RE-BS}
\begin{split}
R_V(s) &= \sum_{\ell = 0}^{2m-1} R_0(s) \bigl[- V R_0(s)\bigr]^\ell \\
&  \hspace{.6cm}  + \bigl[ R_0(s) V\bigr]^{m} R_V(s) \bigl[V R_0(s)\bigr]^{m}.
\end{split}
\end{equation}
In the free resolvent kernel estimates of Lemma~\ref{R0.Lp.lemma}, the derivatives with respect to $\lambda$ do not affect the order of growth in $\lambda$.   The same holds true of the full resolvent operator estimates in Proposition~\ref{prop:resest}.
We may thus focus on the undifferentiated case, as taking derivatives will merely change the constants.

Let us first focus on the remainder term in the series expansion \eqref{eqn:RE-BS}, 
\[
\bigl[ R_0(\tfrac{n}2 + i\lambda) V\bigr]^{m} R_V(\tfrac{n}2 + i\lambda) 
\bigl[V R_0(\tfrac{n}2 + i\lambda)\bigr]^{m},
\]
since the behavior of this term drives the choice of $m$.  We do not include the imaginary part here because that 
does not provide any advantage for the remainder term.  
Using the assumption that $V \in \rho^\alpha L^\infty$, we can write the kernel of this operator as an $L^2$-pairing,
\[
\bigl[ R_0(\tfrac{n}2 + i\lambda) V\bigr]^{m} R_V(\tfrac{n}2 + i\lambda) \bigl[V R_0(\tfrac{n}2 + i\lambda)\bigr]^{m}(z,z')
= \left\langle \rho^{\alpha/2} R_V \rho^{\alpha/2} 
h_{z'}, h_z \right\rangle_{L^2},
\]
where 
\[
h_z := \bigl[ \rho^{\alpha/2} R_0(\tfrac{n}2 - i\lambda) \rho^{\alpha/2}\bigr]^{m}\rho^{-\alpha/2}(z,\cdot).
\]
With the hypothesis that $R_V(\tfrac{n}2 + i\lambda)$ has no pole at $\lambda = 0$, we can extend the estimate of 
Proposition~\ref{prop:resest} through $\lambda = 0$ to give
\begin{equation}\label{BSm.bound}
\begin{split}
&\Bigl|\bigl[ R_0(\tfrac{n}2 + i\lambda) V\bigr]^{m} R_V(\tfrac{n}2 + i\lambda) \bigl[V R_0(\tfrac{n}2 + i\lambda)\bigr]^{m}(z,z')\Bigr| \\
&\qquad\le C_\alpha \brak{\lambda}^{-1} \norm{h_z}_{L^2} \norm{h_{z'}}_{L^2},
\end{split}
\end{equation}

Applying Lemma~\ref{young.lemma} iteratively gives the estimate
\begin{equation}\label{Aqm.bound}
\norm{h_z}_{L^2}
\le \sup_z \norm{ \rho^{\alpha/2} R_0(\tfrac{n}2 + i\lambda) \rho^{\alpha/2}(\cdot, z)}_{L^q}^m,
\end{equation}
provided that
\[
q = \frac{2m}{2m-1}.
\]
The left- and right-sided estimates needed for Lemma~\ref{young.lemma} are identical by the symmetry of $R_0(s;z,w)$.  Note that we do not have a weight factor $\rho^{\alpha/2}$ on the right for the final $R_0$ term in $h_z$, 
but fortunately Lemma~\ref{young.lemma} shows that we only need the estimate of this term in the left variable.  

Lemma~\ref{R0.Lp.lemma} applies to the right-hand side of \eqref{Aqm.bound} to give the estimate 
\begin{equation}\label{Aqm.bound2}
\norm{h_z}_{L^2} \le C_{n,q,\alpha} \brak{\lambda}^{n-1},
\end{equation}
provided $1 \le q < \frac{n+1}{n-1}$ and $\alpha/2 > n(1/q - 1/2)$.  The first requirement translates to 
\[
m > \frac{n+1}4,
\]
while the second requires that
\[
\alpha > n \left(\frac{m-1}{m}\right).
\]
Under these conditions, the combination of \eqref{BSm.bound} and \eqref{Aqm.bound2} gives
\[
\Bigl| \bigl[ R_0(\tfrac{n}2 + i\lambda) V\bigr]^{m} R_V(\tfrac{n}2 + i\lambda) 
\bigl[V R_0(\tfrac{n}2 + i\lambda)\bigr]^{m}(z,z') \Bigr|
\le C_{n,m,\alpha} \brak{\lambda}^{2m(n-1)-1},
\]
uniformly in $z,z'$.

Now that we have the condition on $m$, let us consider the imaginary part of 
a typical term in the expansion \eqref{eqn:RE-BS},
\[
\im \left(R_0(\tfrac{n}2 + i\lambda) [V R_0(\tfrac{n}2 + i\lambda)]^\ell \right),
\]
for $l = 0, \dots, 2m-1$.  Here taking the imaginary part is actually crucial.  We can expand the product so that each term has $\im R_0$ appearing as a factor in some position.  This guarantees that we can apply the estimate \eqref{Im.qnorm} to one of the $R_0$ factors.  For the factors of $R_0$ without imaginary part we are restricted to $L^q$ estimates with $q < \tfrac{n+1}{n-1}$, but for the $\im R_0$ term we can take any $1 \le q' \le \infty$.
For the estimates it does not make a difference which factor carries the imaginary part, so we can treat all the terms by the same approach.

By successive applications of Lemma~\ref{young.lemma}, using again the bounds from Lemma~\ref{R0.Lp.lemma}, we have
\[
\Bigl| \im \left(R_0(\tfrac{n}2 + i\lambda) [V R_0(\tfrac{n}2 + i\lambda)]^\ell \right) (z,z') \Bigr|
\le C \brak{\lambda}^{\ell(n-1)},
\]
provided that
\[
\frac{\ell}{q} + \frac{1}{q'} = \ell,
\]
and
\[
\frac{\alpha}2 > n \left(\frac{1}{q} - \frac12 \right), \qquad \frac{\alpha}2 > n\left(\frac{1}{q'} - \frac{1}{2}\right).
\]
If we take $q$ just below $\frac{n+1}{n-1}$, then $q'$ lies just above $\frac{n+1}{2\ell}$.  With such choices it's not hard to check that the conditions on $\alpha$ can be satisfied if $\alpha > n(n-3)/(n+1)$ and $\alpha > (2\ell - 2)/(n+1)$.  For any $n$ these requirements are weaker than the condition 
$\alpha > n(m-1)/m$ coming from the remainder term.
\end{proof}

\begin{remark}
Above, we have used the resolvent to approach analysis of the spectral measure.  Another approach could be to construct a modified Eisenstein series solution similar to the analysis in the work \cite{dyatlov2012microlocal}, which would parallel the distorted Fourier basis approach of \cite{Mar-lin}.  Such a study could be of independent interest and perhaps lead to a better understanding of the sharpness of our decay assumptions on $V$.  
\end{remark}

\section{Dispersive estimates: scalar case}
\label{sec:scalarinhom}

In this section, we proceed to prove Theorem \ref{thm1}.  With the hyperbolic convention for the spectral parameter, Stone's formula 
gives the continuous component of the spectral resolution as 
\begin{equation}
\label{F.def} 
\begin{split}
d\Pi_V (\lambda) &:= 2i\lambda \left[R_V (\tfrac{n}2 + i\lambda) - R_V (\tfrac{n}2 - i\lambda)\right] d\lambda \\
&=  -4\lambda \left[\im R_V (\tfrac{n}2 + i\lambda)\right] d\lambda.
\end{split} 
\end{equation}
We can then write the kernel of the Schr\"odinger propagator as
\begin{equation}
\label{eqn:specrep_inhom}
\left[e^{i t(-\Delta + V)}P_c\right] (z,w) = \frac{1}{i \pi} \int_0^\infty \int_{\HS^{n+1}}  e^{i t \lambda^2} \> d\Pi_V(\lambda; z,w).
\end{equation}

For both the high and low frequencies the dispersive estimates can now be derived from a combination of \eqref{ImRV.prop} and
integration by parts.

\begin{proof}[Proof of Theorem~\ref{thm1}]

Let $\chi \in \cinf_0(\bbR^+)$ be a cutoff function with $\chi(\lambda) = 1$ for $\lambda \le 1$.  We first consider the $t$ dependence of the high-frequency term,
\begin{equation}
\label{high-freq}
\int_0^\infty \int_{\HS^{n+1}} (1 - \chi(\lambda)) 
e^{i t \lambda^2} g(w) \>d\Pi_V(\lambda; z,w) \> d \Vol_w .
\end{equation}
Assuming that $V$ satisfies the hypotheses, we claim that for any $N>0$ and $R>1$, we have
\[
\sup_{z,w \in \HS^{n+1}} \abs{\int_0^\infty \chi(\lambda/R) (1 - \chi(\lambda)) 
e^{i t \lambda^2} \>d\Pi_V(\lambda; z,w)} \le C_{N,V} t^{-N},
\]
for $t>1$, where $C_{N,V}$ is independent of $R$.

Writing the spectral resolution as in \eqref{F.def} and integrating by parts $N$ times gives
\[
\begin{split}
&\int_0^\infty  \chi(\lambda/R) (1 - \chi(\lambda)) e^{i t \lambda^2} \>d\Pi_V(\lambda; z,w) = \\
&\ \
C_N t^{-N} \int_1^\infty e^{i t \lambda^2} \left(\lambda^{-1}\del_\lambda\right)^N 
\Bigl[ \chi(\lambda/R) (1 - \chi(\lambda)) \lambda \im R_V(\tfrac{n}2 + i\lambda; z,w)\Bigr] \>d\lambda.
\end{split}
\]
By Proposition~\ref{ImRV.prop}, we have 
\[
\sup_{z,w} \abs{\left(\lambda^{-1}\del_\lambda\right)^N 
[\chi(\lambda/R) (1 - \chi(\lambda) ] \lambda \im R_V(\tfrac{n}2 + i\lambda; z,w)} \le C_{N,V} \brak{\lambda}^{M-N+1},
\]
uniformly in $z,w$, and the result follows by direct $L^1$ estimate provided we take $N>M+1$.

Now let us consider the low-frequency term.  Note that since $d\Pi(\lambda)$ is an even function of $\lambda$, 
a single integration by parts gives
\[
\int_0^\infty  \chi(\lambda) e^{i t \lambda^2} \>d\Pi_V(\lambda; z,w) 
= Ct^{-1} \int_{-\infty}^\infty e^{i t \lambda^2} f'(\lambda)\>d\lambda,
\]
where
\[
f(\lambda) := \chi(\abs{\lambda}) \im[R_V(\tfrac{n}2 + i\lambda;z,w)],
\]
and we have exploited the conjugation symmetry to extend the integral to $\bbR$.   
The dispersive bound for the free one-dimensional Schr\"odinger equation now gives
\[
\abs{\int_0^\infty  \chi(\lambda) e^{i t \lambda^2} \>d\Pi_V(\lambda; z,w)} 
\le Ct^{-3/2} \norm{\mathcal{F}^{-1}(f')}_{L^1(\bbR)},
\]
for $t \geq 1$ 
With the simple bound,
\[
\norm{\mathcal{F}^{-1}(f')}_{L^1(\bbR)} \le C(\norm{f'}_{L^1 (\bbR)} + \norm{f'''}_{L^1 (\bbR)}),
\]
the low-frequency result then follows from the estimates in Proposition~\ref{ImRV.prop}.
\end{proof}

\section{Dispersive Estimates: matrix case}
\label{sec:matrix}

In this section, we construct the matrix Schr\"odinger operator spectral resolution and proceed to prove Theorem \ref{thm2}.  
Using the strategy from the scalar case, analysis in the matrix case boils down to resolvent estimates for a free Hamiltonian of the form
\begin{equation}
\label{eqn:H0}
\calH_0 := \begin{pmatrix} -\Delta + (\mu - \tfrac{n^2}4) & 0 \\
0 & \Delta - (\mu - \tfrac{n^2}4)  \end{pmatrix}.
\end{equation}
The spectrum is clearly $\sigma(\calH_0) = (-\infty, -\mu] \cup [\mu,\infty)$.
For $z \in \bbC - \sigma(\calH_0)$, the resolvent of $\calH_0$ is related to the free scalar resolvent by
\[
(\calH_0 - z)^{-1} = \begin{pmatrix}
R_0\left(\frac{n}{2} + \sqrt{\mu - z} \right)  &  0 \\
0 & - R_0\left(\frac{n}{2} +  \sqrt{\mu + z} \right)
\end{pmatrix},
\]
with the principal branch of the square root used for $\sqrt{\mu \pm z}$.  

We easily observe via estimates from Section \ref{spectheory} that the operator
\begin{equation*}
(\calH_0 - z)^{-1} V
\end{equation*}
is compact as an operator on $L^2$, 
for $V$ a matrix potential operator with components in $\rho^\alpha L^{\infty}(\bbH^{n+1}, \bbR)$ with $\alpha>0$
and $z \in \bbC - \sigma(\calH_0)$.
This allows us to define the perturbed matrix resolvent as in \cite[Lemma 4]{ES1}, by applying the Fredholm alternative
to the formula
\begin{equation*}
(\calH - z)^{-1} = (I + ( \calH_0 - z)^{-1} V)^{-1} ( \calH_0 - z)^{-1}.
\end{equation*}
As noted in the introduction, we can see from this that the continuous spectrum of $\calH$ is 
$\sigma(\calH_0)$ and that otherwise the spectrum of $\calH$ is purely discrete.  

In the free case, the estimates of Section \ref{spectheory} also imply a limiting absorption principle extending the resolvent to the continuous spectrum as an operator on the weighted space $\rho^\delta L^2$.  
If we set $m = \mu + \tau^2$ with $\tau >0$, then this extension is related to the free scalar resolvent by
\begin{equation}
\label{eqn:H0res}
(\calH_0 - (m \pm i 0))^{-1} = \begin{pmatrix}
R_0(\frac{n}{2} \mp i\tau; z,w )  &  0 \\
0 & - R_0\left(\frac{n}{2} +  \sqrt{2\mu + \tau^2}; z,w \right)   \end{pmatrix}
\end{equation}
There is an equivalent formulation for $m  = -\mu - \tau^2$.  

The hypothesis of Theorem~\ref{thm2} that $\calH$ has no embedded eigenvalues or resonances amounts to the assumption that the limiting absorption principle applies also to the perturbed resolvent, allowing us to define 
$(\calH - (m \pm i 0))^{-1}$ for $\abs{m} > \mu$.  


In the scalar case we used Stone's formula to write the spectral resolution.  This of course does not apply in the matrix case because of the lack of self-adjointness.  
However, we claim that an equivalent representation of the continuous component of the Schr\"odinger propagator still holds,
\begin{equation}\label{matrix.eith}
e^{i t \calH} P_c = \frac{1}{2 \pi i} \int_{|m| > \mu} e^{i t m} \left[ ( \calH - ( m + i0))^{-1} - (\calH - (m-i0))^{-1} \right] d m,
\end{equation}
in a suitable weak sense on weighted $L^2$ spaces.  
This representation is completely analogous to \cite[Lemma ~12]{ES1}.   Indeed, the complex analytic arguments used to establish this representation in \cite{ES1} apply directly in our case. 

From here, the proof of Theorem~\ref{thm2} works very much as in the scalar case.  We analyze \eqref{matrix.eith} using the Birman-Schwinger expansion in powers of the matrix potential $V$.  
From \eqref{eqn:H0res}, we can see that the free resolvent terms in this expansion involve either $R_0(\tfrac{n}{2} \mp i\tau)$, whose kernel was analyzed in the scalar case, or $R_0(\tfrac{n}2 + \sigma)$ 
for $\sigma>0$, whose decay properties are significantly better, as shown in \S\ref{sec:specasym}.  Hence the necessary $L^q$ estimates on the free kernels follow just as in Lemma~\ref{R0.Lp.lemma}.

The only other ingredient that we need for the matrix proof is a weighted operator norm bound on the full resolvent, analogous to 
Proposition \ref{prop:resest}.  This bound is crucial for handling the remainder term in the Birman-Schwinger expansion.  
For $m>0$ sufficiently large, we need to show
\[
\norm{\rho^{\alpha/2} \del_\lambda^q (\calH - (m \pm i 0))^{-1}\rho^{\alpha/2}}_{L^2 \to L^2} \le C_{q,\alpha} \abs{m}^{-1}.
\] 
And, just as in Proposition \ref{prop:resest}, this is a relatively simple consequence of the corresponding free bound,
\[
\norm{\rho^{\alpha/2} \del_\lambda^q (\calH_0 - (m \pm i 0))^{-1}\rho^{\alpha/2}}_{L^2 \to L^2} \le C_{q,\alpha} \abs{m}^{-1}.
\]
By \eqref{eqn:H0res}, this bound follows directly from scalar case, Proposition~\ref{R0.bounds}.

After extending these bounds to the matrix case, we can prove pointwise estimates the kernel of the operator $( \calH - ( m + i0))^{-1} - (\calH - (m-i0))^{-1}$ appearing in \eqref{matrix.eith}, just as in Proposition~\ref{ImRV.prop}.   The proof of Theorem~\ref{thm2} then follows directly by the same argument given for the scalar case in Section~\ref{sec:scalarinhom}.

\bibliographystyle{abbrv}
\bibliography{MMT-bib1}

\end{document}